\title{The Category of Necklaces is Reedy Monoidal}
\author{Violeta Borges Marques}
\address[Violeta Borges Marques]{Universiteit Antwerpen, Departement Wiskunde, Middelheimcampus, Middelheimlaan 1, 2020 Antwerp, Belgium}
\email{Violeta.BorgesMarques@uantwerpen.be}
\author{Arne Mertens}
\address[Arne Mertens]{Universiteit Antwerpen, Departement Wiskunde, Middelheimcampus, Middelheimlaan 1, 2020 Antwerp, Belgium}
\email{arne.mertens@uantwerpen.be}
\thanks{
This project has received funding from the European Research Council (ERC) under the European Union’s Horizon 2020 research and innovation programme (grant agreement No. 817762).
}
\subjclass[2022]{18M05, 18N40 (Primary), 05E45 (Secondary)}
\keywords{}
\DeclareMathOperator{\cvee}{\vee\cdots\vee}
\DeclareMathOperator{\Ob}{Ob}
\DeclareMathOperator{\id}{id}
\DeclareMathOperator{\Fun}{Fun}
\DeclareMathOperator*{\colim}{colim}
\DeclareMathOperator{\im}{im}
\DeclareMathOperator{\Ch}{Ch}
\DeclareMathOperator{\SSet}{SSet}
\DeclareMathOperator{\Cat}{Cat}
\DeclareMathOperator{\sk}{sk}
\DeclareMathOperator{\cosk}{cosk}
\newcommand{\fint}{\mathbf{\Delta}_{f}} %category of finite intervals
\newcommand{\simp}{\mathbf{\Delta}} %simplicial walking category
\newcommand{\nec}{\mathcal{N}ec} %category of necklaces
\newcommand{\ts}{S_{\otimes}} %category of ts-objects
\newcommand{\N}{\mathbb{N}}
\newcommand{\rrr}{\mathcal{R}} 
\newcommand{\sss}{\mathcal{S}} 
\newcommand{\op}{\mathrm{op}}
\DeclareMathOperator{\dayconv}{\otimes_{\Day}}
\newcommand{\ot}{\leftarrow}
\DeclareMathOperator{\ac}{ac}
\DeclareMathOperator{\ine}{in}
\DeclareMathOperator{\Reedy}{Reedy}
\DeclareMathOperator{\Day}{Day}
\newtheorem{Thm}{Theorem}[section]
\newtheorem*{Thm*}{Theorem}
\newtheorem{Lem}[Thm]{Lemma}
\newtheorem{Prop}[Thm]{Proposition}
\newtheorem{Cor}[Thm]{Corollary}
\theoremstyle{definition}
\newtheorem{Def}[Thm]{Definition}
\newtheorem{Exs}[Thm]{Examples}
\theoremstyle{remark}
\newtheorem{Rem}[Thm]{Remark}
\begin{document}

\begin{abstract}
In the first part of this note we further the study of the interactions between Reedy and monoidal structures on a small category, building upon the work present in \cite{Barwick}. We define a \emph{Reedy monoidal category} as a Reedy category $\rrr$ which is monoidal such that for all symmetric monoidal model categories $\textbf{A}$, the category $\Fun\left(\rrr^{\op}, \textbf{A}\right)_{\Reedy}$ is monoidal model when equipped with the Day convolution. In the second part, we study the category $\nec$ of necklaces, as defined in \cite{baues1980geometry}\cite{dugger2011rigidification}. Making use of a combinatorial description present in \cite{grady2023extended}\cite{lowen2023enriched}, we streamline some proofs from the literature, and finally show that $\nec$ is simple Reedy monoidal.
\end{abstract}

\maketitle

\tableofcontents

\section{Introduction}

\par Reedy categories were first defined by Kan in unpublished notes and provide an abstract setting to generalize \cite[Lemma 1.2]{reedy1974}, where simplicial objects are inductively defined through the factorization of the canonical maps $\sk_n X \to \cosk_n X$. This inductive construction allows one to define a model structure, the Reedy model structure, on functor categories $\Fun\left(\rrr^{\op}, \textbf{A}\right)$. As opposed to the projective and injective model structure \cite{lurie2009higher}, the existence result for the Reedy model structure relies on imposing strong conditions on the small category $\rrr$ rather than on the model category $\textbf{A}$. The Reedy model structure also has the advantage that both (trivial) fibrations and cofibrations are explicitly prescribed.
\par Necklaces first appeared in \cite{baues1980geometry} (under the name ``cellular strings'') in the study of the bar and cobar construction and their relation to loop spaces. The terminology ``necklace'' was introduced and popularized by Dugger and Spivak, in \cite{dugger2011rigidification}\cite{dugger2011mapping} to provide several homotopically equivalent models of the mapping spaces of a quasi-category. Since then, necklaces have found further applications in work by Rivera and others, in particular in the construction of models of path spaces \cite{CategoricalPath}\cite{CombinatorialPath}; an adaptation of necklaces, called closed necklaces, was used to construct models of free loop spaces in \cite{AlgebraicFree} and \cite{CombinatorialFree}. In \cite{Einfty} a strengthening of Baues' original results was presented. Several generalizations that use necklaces followed, for example, for dendroidal $\infty$-operads in \cite{DendroidalNecklace}, for cubical quasi-categories in \cite{curien2022rigidification}, for cartesian enriched quasi-categories in \cite{gindi2019rigidification} and not necessarily cartesian enriched in \cite{lowen2023enriched}\cite{mertens2023nerves}. The category of necklaces has also been employed to construct a Segalification functor, providing a Segal space generated by a simplicial space, in \cite{barkan2023segalification}, to treat concurrency problems in \cite{concurrency} and to study (fully) extended functorial field theories (FFT) in \cite{grady2023extended}.
\par In the present paper, we consider the model category $\Fun(\rrr^{op},\textbf{A})_{\Reedy}$ for a Reedy category $\rrr$ and a symmetric monoidal model category $\textbf{A}$. This category is naturally endowed with the pointwise monoidal structure and the question of whether this makes the Reedy model structure monoidal model is treated in \cite{ghazel2019}. On the other hand, if $\rrr$ is itself monoidal, we can also consider the Day convolution product \cite{day1970closed} on $\Fun(\rrr^{op},\textbf{A})$. We define $\rrr$ to be \emph{Reedy monoidal} (Definition \ref{Def:ReedyMonoidal}) if $\Fun(\rrr^{op},\textbf{A})_{\Reedy}$ is monoidal model with respect to the Day convolution for every symmetric monoidal model category $\textbf{A}$. Building on general model categorical results by Barwick \cite{Barwick}, we also provide simple combinatorial conditions to ensure that a category that is both Reedy and monoidal is indeed Reedy monoidal (Theorem \ref{Thm:ReedyMonoidal}).
\par Subsequently, we construct a Reedy structure on the category $\nec$ of necklaces (Theorem \ref{Thm:NecIsReedy}). The main ingredients of this structure are (at least implicitly) present in \cite{CombinatorialPath}\cite{rivera2018cubical}, but to the best of our knowledge the full Reedy (monoidal) structure has not been presented explicitly in the literature yet. For this, we make use of a combinatorial description of $\nec$ put forward in \cite{grady2023extended}\cite{lowen2023enriched} which also makes some existing results easier to prove. Finally, we show that $\nec$ is a simple Reedy monoidal category (Theorem \ref{Thm:NecIsReedyMonoidal}) so that $\Fun(\nec^{op},\textbf{A})_{\Reedy}$ is always monoidal model when equipped with the Day convolution.

\subsection{Motivation}
In \cite{simpson2012homotopy}, Simpson developed a theory of Segal categories enriched in a cartesian model category. Given an appropriate cartesian model category $\mathcal{M}$, a model structure on the category of unital $\mathcal{M}$-precategories is established, where the weak equivalences are the global weak equivalences and the fibrant objects satisfy the Segal condition (Theorems IV.19.2.1 and IV.19.4.1). The results of the present paper are motivated by an ongoing project to construct a model for Segal categories enriched in non-cartesian monoidal model categories $\mathbf{A}$ as well. A main example of interest is the projective model structure on chain complexes $\mathbf{A} = \Ch(k)$ over a field $k$. The unital $\mathbf{A}$-precategories in this case are no longer given by simplicial objects $S\mathbf{A}$, but by \emph{templicial objects} $\ts\mathbf{A}$. These are certain strictly unital colax monoidal functors which were introduced in \cite{lowen2023enriched} as replacements for simplicial objects in the non-cartesian context, and inspired by earlier work of Leinster \cite{leinster2000homotopy} and Bacard \cite{bacard2010Segal}\cite{bacard2013colax}. In the non-cartesian setting we propose then the following definition.

\begin{Def}
Let $\textbf{A}$ be a monoidal model category. A Segal category is a templicial $\textbf{A}$-object $(X,S)\in \ts\textbf{A}$ such that the comultiplication maps
    $\mu_{i,j}: X_{i+j}\xrightarrow[]{\sim} X_i \otimes_S X_j$
\noindent are weak equivalences for all $i,j\geq 0$.
\end{Def}

\par A first step in this direction is the definition of a Reedy model structure on $\ts \textbf{A}$ as follows. It is shown in \cite{lowen2023enriched} that there is an adjunction with fully faithful left adjoint
\[\begin{tikzcd}
	\ts\mathbf{A} & {\Fun(\nec^{op},\mathbf{A})\text{-}\Cat}
	\arrow[""{name=0, anchor=center, inner sep=0}, "{(-)^{nec}}", hook, shift left=2, from=1-1, to=1-2]
	\arrow[""{name=1, anchor=center, inner sep=0}, "{(-)^{temp}}", shift left=2, from=1-2, to=1-1]
	\arrow["\dashv"{anchor=center, rotate=-90}, draw=none, from=0, to=1]
\end{tikzcd}\]
In future work, we will show that under suitable yet moderate conditions, the monoidal model structure $\Fun\left(\nec^{\op}, \textbf{A}\right)_{\Reedy}$ induces one on  $\Fun\left(\nec^{\op}, \textbf{A}\right)$-$\Cat$ and $\Fun\left(\nec^{\op}, \textbf{A}\right)$-$\Cat_S$, the category of necklicial categories with fixed object set $S$. In case $\textbf{A}$ is cartesian or $\textbf{A}=\Ch k$, the former can be further transferred to $\ts \textbf{A}_S$. Moreover, in case $\textbf{A}$ is cartesian, we recover the classical Reedy structure under the equivalence $\ts \textbf{A}_S\cong \Fun\left(\Delta_S^{\op} /S, \textbf{A}\right)$ \cite[Proposition III.12.3.1]{simpson2012homotopy}.

\vspace{0,3cm}
%\begin{comment}
\noindent \emph{Acknowledgement.}
The authors would like to thank Wendy Lowen for her helpful discussions and support during the writing of this paper. They are also grateful to Clemens Berger for drawing attention to the reference \cite{baues1980geometry}, and to Dmitry Kaledin for pointing out the work of Manuel Rivera.
%\end{comment}

\section{Reedy monoidal categories}
 \par We start this section by recalling some concepts of the theory of Reedy categories following \cite[\S 3]{Barwick}. While loc. cit. introduces these concepts with a model category theoretical approach, we opt to adopt the equivalent combinatorial characterizations as our primary definitions. For a treatment of the model category theory aspects, we defer to the latter part of this section.  

 In \S 2.1, we show that any morphism of Reedy categories that restricts to a discrete fibration between direct subcategories, is a right fibration (Proposition \ref{Prop:DirectDivImpliesRightFib}). 
 As a consequence, the monoidal product of any direct divisible Reedy category in the sense of \cite{bacard2013colax} is a right fibration. We conclude in \S 2.2 by introducing Reedy monoidal categories (Definition \ref{Def:ReedyMonoidal}), along with sufficient conditions for checking this property (Theorem \ref{Thm:ReedyMonoidal}).

\subsection{Reedy categories and right fibrations}

\par For standard treatments of the theory of Reedy categories, we refer to \cite{hovey1999model} \cite{hirschhorn2003model}. Let us fix a small category $\mathcal{R}$.

\begin{Def}[\cite{hovey1999model}, Definition 5.2.1]
\label{Def:ReedyCat}
A \emph{Reedy structure} on $\rrr$ is a pair of wide \emph{inverse} and \emph{direct} subcategories $(\rrr^\ot, \rrr^\to)$ and a \emph{degree function} $\deg: \Ob(\rrr)\rightarrow \lambda$ with $\lambda$ an ordinal, such that the following conditions are satisfied:
\begin{enumerate}
    \item Every morphism $f$ in $\rrr$ factors uniquely as $f = f^{\to}\circ f^{\ot}$ with $f^{\to}\in\rrr^{\to}$ and $f^{\ot}\in\rrr^{\ot}$.
    \item Every non-identity morphism in $\rrr^{\ot}$ lowers the degree and every non-identity morphism in $\rrr^{\to}$ raises the degree.
\end{enumerate}
\end{Def}

\begin{Exs}\label{Ex:ReedyCat}
\begin{enumerate}
    \item The simplex category $\simp$ is Reedy with the degree function $d: \Ob(\simp)\rightarrow \N: [n]\mapsto n$, and $\simp^{\ot}$ and $\simp^{\to}$ containing the surjective and injective order morphisms respectively.
    %\item The terminal category $\{*\}$ has an obvious Reedy structure.
    \item If $(\mathcal{R}, \rrr^\ot, \rrr^\to)$ and $(\mathcal{S}, \sss^{\ot}, \sss^{\to})$ are Reedy categories, then so are $(\rrr^{\op}, (\rrr^{\to})^{\op}, (\rrr^{\ot})^{\op})$ and $(\rrr \times \sss, \rrr^{\ot}\times \sss^{\ot}, \rrr^{\to}\times \sss^{\to})$.
\end{enumerate}
\end{Exs}

\begin{Def}[\cite{hovey1999model}, Def 5.1.2]
\label{Def:LatchingMatchingCats} Let $\mathcal{R}$ be a Reedy category and $\alpha\in \mathcal{R}$. The \emph{latching category} at $\alpha$ is
\[
\partial\left(\mathcal{R}^\to /\alpha\right)=\{f\in\mathcal{R}^\to /\alpha \mid f\neq \id\}
\]
\noindent and the \emph{matching category} at $\alpha$ is
\[
\partial\left(\alpha / \mathcal{R}^\ot \right)=\{f\in\alpha / \mathcal{R}^\ot  \mid f\neq \id\}.
\]
\end{Def}

\begin{Rem}
\label{Rem:LatchingMatchingOp}
Note that there is a canonical isomorphism between the latching (resp. matching) category of $\rrr$ at $\alpha$ and the matching (resp. latching) category of $\rrr^{\op}$ at $\alpha$.
\end{Rem}

\par Now let us turn to functors between Reedy categories.

\begin{Def}[\cite{Barwick}, Definition 3.16.1]
\label{Def:MorphismOfReedy}
Let $\mathcal{R}$ and $\mathcal{S}$ be two Reedy categories. A functor $\mathcal{R}\xrightarrow{F}\mathcal{S}$ is a \emph{morphism of Reedy categories} if $F(R^\to)\subseteq S^\to$ and $F(R^\ot)\subseteq S^\ot$.
\end{Def}
\begin{comment}
    \par Before singling out the class of morphisms we are particularly interested in, we present the following result concerning comma Reedy  categories.

\begin{Prop}[\cite{Barwick}, Lemma 3Lemma 3.10.
\label{Prop:CommaReedy}
Let $\mathcal{R}\xrightarrow{F}\mathcal{S}$ be a morphism of Reedy categories and $\beta\in \mathcal{S}$. Then:
\begin{itemize}
    \item the comma category $\beta/F$ is a Reedy category where $(\alpha, f)\xrightarrow{g}(\alpha, f')\in\left(\beta/F\right)^\to$ (resp. $(\alpha, f)\xrightarrow{g}(\alpha, f')\in\left(\beta/F\right)^\ot$) if and only if $g\in\mathcal{R}^\to$ (resp. $g\in\mathcal{R}^\to$);
    \item the comma category $F/\beta$ is a Reedy category where $(\alpha, f)\xrightarrow{g}(\alpha, f')\in\left(F/\beta\right)^\to$ (resp. $(\alpha, f)\xrightarrow{g}(\alpha, f')\in\left(F/\beta\right)^\ot$) if and only if $g\in\mathcal{R}^\to$ (resp. $g\in\mathcal{R}^\to$).
\end{itemize} 
\end{Prop}
\end{comment}

\begin{Def}[\cite{Barwick}, Theorem 3.22]
\label{Def:RightFibIffLatchingEmptyOrConnectec}
A morphism of Reedy categories $\mathcal{R}\xrightarrow{F}\mathcal{S}$ is called
\begin{itemize}
    \item a \emph{right fibration} if for every $\alpha\in\mathcal{R}$ and every $\beta \xrightarrow{f}F(\alpha) \in \beta / F$ the category $\partial\left(\left(\beta/F\right)^\to/f\right)$ is empty or connected. The category $\rrr$ is \emph{right fibrant} if the functor $\rrr\to\{*\}$ is a right fibration.
    \item a  \emph{left fibration} if for every $\alpha\in \rrr$ and every $F(\alpha)\xrightarrow{f}\beta\in F/\beta$ the category $\partial\left(f/\left(F/\beta\right)^{\ot}\right)$ is empty or connected. The category $\rrr$ is \emph{left fibrant} if the functor $\rrr\to\{*\}$ is a left fibration.
\end{itemize}
\end{Def}
\begin{Rem}
\label{Rem:CommaReedy}
In the previous definition we make implicit use of the Reedy structure on $\beta / F$ and $F / \beta$ given in \cite[Lemma 3.10]{Barwick}.
\end{Rem}

\begin{Rem}
\label{Rem:TerminalLeftFib}
Observe that $\rrr$ is left fibrant if and only if for any $\alpha\in\rrr$, the matching categories $\partial\left(\alpha/\rrr^{\ot}\right)$ are empty and connected. Note that if $\rrr$ has a terminal object $\star$ such that for all $\alpha\in\rrr$, the unique map $\alpha \to \star$ belongs to $\rrr^{\ot}$, then $\rrr$ is left fibrant.
\end{Rem}

\par The conditions of the above definition are purely combinatorial and very tractable. As we are going to make use of it later, let us make the latching category involved in the definition of right fibration more explicit. For fixed $\alpha\in \rrr$ and $f:\beta \to F(\alpha)$, its objects are triples $(\alpha'\in R, f':\beta \to F(\alpha'), g: \alpha' \hookrightarrow \alpha)$ with $g\neq \id$ in $\mathcal{R}^{\to}$ such that $f=F(g)\circ f'$ which we represent by the diagram
\[
\begin{tikzcd}[row sep=1ex]
& F(\alpha')\arrow[hook, start anchor=east]{rd}{}\arrow[hook,near start, start anchor=east]{rd}{F(g)}\\
\beta \arrow[end anchor=west]{ru}{f'}\arrow[swap]{rr}{f}& & F(\alpha)\\
\end{tikzcd}
\]
\noindent and the morphisms $(\alpha', f', g')\to (\alpha'', f'', g'')$ are morphisms $h:\alpha' \to \alpha''$ in $R^{\to}$ such that $F(h)\circ f'=f''$ and $g''\circ h=g'$.
\par Recall that a functor $F: \mathcal{C}\rightarrow \mathcal{D}$ between categories is called a \emph{discrete fibration} if for all $C\in \mathcal{C}$ and all $f: D\rightarrow F(C)$ in $\mathcal{D}$, there is a unique $g: \bar{C}\rightarrow C$ in $\mathcal{C}$ such that $F(g) = f$.

\begin{Prop}
\label{Prop:DirectDivImpliesRightFib}
Let $F: \rrr\rightarrow \sss$ be a morphisms of Reedy categories. If the restriction $F^{\to}: \rrr^{\to}\rightarrow \sss^{\to}$ is a discrete fibration of categories, then $F: \rrr\rightarrow \sss$ is a right fibration of Reedy categories.
\end{Prop}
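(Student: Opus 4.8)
The plan is to verify the right fibration condition directly from the explicit description of the latching category $\partial\left(\left(\beta/F\right)^{\to}/f\right)$ recorded above. Fix $\alpha\in\rrr$ and a morphism $f\colon\beta\to F(\alpha)$ of $\sss$, and let $f = f^{\to}\circ f^{\ot}$ be its Reedy factorization in $\sss$, say with $f^{\ot}\colon\beta\to\gamma$ in $\sss^{\ot}$ and $f^{\to}\colon\gamma\to F(\alpha)$ in $\sss^{\to}$. The basic bookkeeping observation I would establish first is that for any object $(\alpha',f',g)$ of this latching category, factoring $f'=(f')^{\to}\circ(f')^{\ot}$ in $\sss$ rewrites $f = \bigl(F(g)\circ(f')^{\to}\bigr)\circ(f')^{\ot}$ as an $\sss^{\ot}$-morphism followed by an $\sss^{\to}$-morphism (using that $F$ sends direct morphisms to direct morphisms), so uniqueness of Reedy factorizations forces $(f')^{\ot}=f^{\ot}$ and $F(g)\circ(f')^{\to}=f^{\to}$; in particular $(f')^{\to}$ has domain $\gamma$.

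Next I would dispose of the degenerate case $f^{\to}=\id$. Here the observation gives $F(g)\circ(f')^{\to}=\id_{F(\alpha)}$ with both factors in $\sss^{\to}$, and a short degree count — non-identity direct morphisms strictly raise degree, so two composable direct morphisms whose composite is an identity must both be identities — forces $F(g)=\id_{F(\alpha)}$. Since $F^{\to}$ is a discrete fibration and $\id_{\alpha}$ already lifts $\id_{F(\alpha)}$ with target $\alpha$, uniqueness of lifts yields $g=\id_{\alpha}$, contradicting $g\neq\id$. Hence in this case the latching category is empty.

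Now assume $f^{\to}\neq\id$. I would use the discrete fibration $F^{\to}\colon\rrr^{\to}\to\sss^{\to}$ to lift $f^{\to}$ to the unique $\tilde g\colon\tilde\alpha\to\alpha$ in $\rrr^{\to}$ with $F(\tilde g)=f^{\to}$; then $F(\tilde\alpha)=\gamma$ and $\tilde g\neq\id$, so $(\tilde\alpha,\, f^{\ot},\, \tilde g)$ is an object of $\partial\left(\left(\beta/F\right)^{\to}/f\right)$. To see it is weakly initial, take any object $(\alpha',f',g)$: lifting $(f')^{\to}\colon\gamma\to F(\alpha')$ along $F^{\to}$ produces $h$ in $\rrr^{\to}$ with $F(h)=(f')^{\to}$, and since $F(g\circ h)=F(g)\circ(f')^{\to}=f^{\to}$ while $g\circ h$ has target $\alpha$, uniqueness of the lift of $f^{\to}$ forces $g\circ h=\tilde g$, whence $h\colon\tilde\alpha\to\alpha'$. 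Together with $F(h)\circ f^{\ot}=(f')^{\to}\circ(f')^{\ot}=f'$, this says precisely that $h$ is a morphism $(\tilde\alpha,\, f^{\ot},\, \tilde g)\to(\alpha',f',g)$ in the latching category. So this category is nonempty with a weakly initial object, hence connected. Combining the two cases, $\partial\left(\left(\beta/F\right)^{\to}/f\right)$ is always empty or connected, which is exactly the right fibration condition.

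The only genuine subtlety — really bookkeeping rather than difficulty — is keeping the various uniqueness statements apart: uniqueness of the Reedy factorization in $\sss$ (to pin down $f^{\ot}$ and $f^{\to}$) versus uniqueness of discrete-fibration lifts along $F^{\to}$, which is invoked with three prescribed targets ($\alpha$ for $\id_{F(\alpha)}$, $\alpha$ again for $f^{\to}$, and $\alpha'$ for $(f')^{\to}$). The degree computation in the case $f^{\to}=\id$ likewise needs a touch of care, as it is exactly what rules out a nonempty disconnected latching category there.
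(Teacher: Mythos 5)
Your proof is correct and takes essentially the same route as the paper: both lift the direct part of the Reedy factorization of $f$ along the discrete fibration $F^{\to}$ to produce a (weakly) initial object of $\partial\left(\left(\beta/F\right)^{\to}/f\right)$ when $f^{\to}\neq\id$, and both show the latching category is empty when $f^{\to}=\id$ by forcing $F(g)=\id$ and then $g=\id$ via uniqueness of lifts. Your write-up is if anything slightly more careful, making explicit the degree argument in the degenerate case and settling for weak initiality, which suffices for connectedness.
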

\begin{proof}
Let us fix $\alpha\in\rrr$ and $f: \beta\rightarrow F(\alpha)$ in $\mathcal{S}$. Since $F^{\to}$ is a discrete fibration and $\sss$ is a Reedy category, we have a unique factorization of $f$:
\begin{equation}\label{equation:RightFibrationProp1}
\begin{tikzcd}[row sep=1ex]
& F(\Bar{\alpha}) \arrow[hook, start anchor=east]{dr}{}\arrow[hook,near start, start anchor=east]{dr}{F(g^{\to})}\\
\beta \arrow[end anchor=west, two heads]{ru}{f^{\ot}}\arrow[swap, two heads, end anchor=west]{rr}{f}& & F(\alpha)\\
\end{tikzcd}
\end{equation}
with $f^{\ot}$ in $\sss^{\ot}$ and $g^{\to}: \bar{\alpha}\rightarrow \alpha$ in $\rrr^{\to}$. We show that this is the initial object of the undercategory $\left((\beta/F)^{\to}/f\right)$. Consider an arbitrary object (represented in a slightly different manner):
\begin{equation}\label{equation:RightFibrationProp2}
\begin{tikzcd}[row sep=1ex]
\beta \arrow[]{rr}{f}\arrow[swap, end anchor=west]{rd}{f'}& & F(\alpha)\\
& F(\alpha') \arrow[hook, start anchor=east]{ru}{}\arrow[swap, hook,near start, start anchor=east]{ru}{F(g')}  & \\
\end{tikzcd}
\end{equation}
\noindent We take the $(\sss^{\ot},\sss^{\to})$-factorization of $f'$:
\[
\begin{tikzcd}[row sep=1ex]
& \Bar{\alpha}'\\
\beta \arrow[two heads, end anchor=west]{ru}{f'^{\ot}}\arrow[]{rr}{f}\arrow[swap, end anchor=west]{rd}{f'}& & F(\alpha)\\
& F(\alpha) \arrow[hook, start anchor=east]{ru}{}\arrow[swap, hook,near start, start anchor=east]{ru}{F(g')}\arrow[hook, crossing over, from=uu, near start]{}{f'^{\to}}  & \\
\end{tikzcd}
\]
\noindent but now by uniqueness of factorization we know $f'^{\ot}=f^{\ot}$, $\Bar{\alpha}'=F(\Bar{\alpha})$ and $F(g')\circ f'^{\to} = F(g^{\to})$. Again since $F^{\to}$ is a discrete fibration, we can write $f'^{\to} = g'^{\to}$ for some $g^{\to}: \bar{\alpha}\rightarrow \alpha'$ in $\rrr^{\to}$ with $g'\circ g'^{\to} = f^{\to}$. Thus we obtain the uniquely defined map
\[
\begin{tikzcd}[row sep=1ex]
& F(\Bar{\alpha})\arrow[hook]{rd}{F(f^{\to})}\\
\beta \arrow[two heads, end anchor=west]{ru}{f'^{\ot}}\arrow[]{rr}{f}\arrow[swap, end anchor=west]{rd}{f'}& & F(\alpha)\\
& F(\alpha') \arrow[hook, start anchor=east]{ru}{}\arrow[swap, hook,near start, start anchor=east]{ru}{F(g)}\arrow[hook, crossing over, from=uu, near start]{}{F(f'^{\to})}  & \\
\end{tikzcd}
\]

Finally, if $f^{\to}\neq \id$, the initial object \eqref{equation:RightFibrationProp1} belongs to the latching category $\partial\left((\beta/F)^{\to}/f\right)$, whereby it is connected. If $f^{\to} = \id$, then it follows for an arbitrary object \eqref{equation:RightFibrationProp2} that $F(g) = \id$ as well. Again since $F$is a discrete fibration, this implies that $g = \id$. Thus \eqref{equation:RightFibrationProp2} does not belong to $\partial\left((\beta/F)^{\to}/f\right)$, whereby it is empty.
\end{proof}

\par Let us end this subsection by establishing some terminology on how monoidal and Reedy structures interact, inspired by \cite{bacard2013colax}.
%\pagebreak
\begin{Def}
\label{Def:CompatibleDirectDivisible}
Let $\rrr$ be a Reedy category with a monoidal structure $(\vee, I)$.
\begin{enumerate}
    \item The monoidal structure $(\vee,I)$ is \emph{compatible} if the functor $\vee$ is a morphism of Reedy categories $\rrr \times\rrr\to\rrr$.
    \item The category $\rrr$ is \emph{direct divisible with respect to $\vee$} if $\vee^{\to}$ is a discrete fibration, i.e., for any map $f:X \to Y_1\vee Y_2\in \rrr^{\to}$, there exist a unique pair of maps in $\rrr^{\to}$ $f_1: X_1 \to Y_1$ and $f_2: X_2 \to Y_2$ such that $f=f_1\vee f_2$.
    \item $(\rrr,\vee,I)$ is \emph{simple} if for any $\alpha,\beta\in\rrr$, $\deg \left(\alpha\vee \beta\right)=\deg\alpha+\deg\beta$.
\end{enumerate} 
\end{Def}
\begin{Rem}
\label{Rem:CompareBacard}
What we call a Reedy category with compatible monoidal structure is exactly a one-object \emph{locally Reedy 2-category} from \cite{bacard2012lax}, and similarly for a simple Reedy category. Bacard defines direct divisibility only for simple Reedy categories, by requiring $\vee^{\to}$ to be a Grothendieck fibration. However in the simple case, this is equivalent to $\vee^{\to}$ being a discrete fibration.
\end{Rem}

\subsection{Reedy monoidal model structures}

\par In this subsection we highlight the usefulness and inspiration of the definitions of the previous subsection in the theory of model categories. Standard references for model category theory are \cite{hovey1999model} and \cite{hirschhorn2003model}.

\begin{Def}[\cite{hovey1999model}, Def 5.2.2]
\label{Def:LatchingMatchingObj}
Let $\mathcal{R}$ be a Reedy category, $\textbf{A}$ a bicomplete category, $\alpha\in \mathcal{R}$ and $X\in\Fun\left(\mathcal{R}, \textbf{A}\right)$. The \emph{latching object} of $X$ at $\alpha$ is
\[
L_\alpha X=\colim_{\beta\to \alpha\in \partial\left(\mathcal{R}^\to /\alpha\right)} X(\beta)
\]
\noindent and the \emph{matching object} of $X$ at $\alpha$ is
\[
M_\alpha X=\lim_{\alpha\to \beta\in \partial\left(\alpha / \mathcal{R}^\ot \right)} X(\beta)\]
\end{Def}

\par Firstly we present the very well known fact that for any model category $\textbf{A}$, $\Fun\left(\rrr^{\op}, \textbf{A}\right)$ can be endowed with a model structure. Note that the latching and matching objects and categories are now referring to $\rrr^{\op}$ (see Example \ref{Ex:ReedyCat}.3).

\begin{Thm}[\cite{hovey1999model}, Theorem 5.2.5]
\label{Thm:ReedyModelStructure}
Let $\mathcal{R}$ be a Reedy category and $\textbf{A}$ a model category. Then there exists a model structure on $\Fun\left(\mathcal{R}^{\op}, \textbf{A}\right)$, denoted $\Fun\left(\mathcal{R}^{\op}, \textbf{A}\right)_{\Reedy}$, such that
\begin{enumerate}
    \item a map $X \xrightarrow{f} Y$ is a weak equivalence if and only if $X_\alpha \xrightarrow{f_\alpha} Y_\alpha$ is a weak equivalence for all $\alpha\in\mathcal{R}$.
    \item a map $X \xrightarrow{f} Y$ is a (trivial) cofibration if and only if $X_\alpha \coprod_{L_\alpha X}L_\alpha Y \to Y_\alpha$ is a (trivial) cofibration for all $\alpha\in\mathcal{R}$.
    \item a map $X \xrightarrow{f} Y$ is a (trivial) fibration if and only if $X_\alpha \to M_\alpha X \times_{M_\alpha Y} Y_\alpha$ is a (trivial) fibration for all $\alpha\in\mathcal{R}$.
\end{enumerate}
\end{Thm}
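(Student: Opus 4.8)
This is \cite[Theorem~5.2.5]{hovey1999model}, and the argument is a transfinite induction along the degree function $\deg\colon\Ob(\rrr)\to\lambda$, which I now outline. First I would observe that the three classes in (1)--(3) are closed under retracts and that the weak equivalences satisfy two-out-of-three, since each of these properties is detected pointwise or through the (trivial) (co)fibrations of $\mathbf{A}$; the real content is the two factorization axioms and the two lifting axioms.

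The plan rests on a structural fact that must be established first: a functor $X\in\Fun(\rrr^{\op},\mathbf{A})$ amounts to the data of its restrictions $X_{\leq n}$ to the full subcategories on objects of degree $\leq n$, where extending from degree $<n$ to degree $n$ consists, for each $\alpha$ with $\deg\alpha=n$, of a factorization $L_\alpha X\to X_\alpha\to M_\alpha X$ of the canonical map $L_\alpha X\to M_\alpha X$. Crucially, for $\rrr^{\op}$ the latching category at $\alpha$ is the opposite of the matching category of $\rrr$ at $\alpha$ (Remark~\ref{Rem:LatchingMatchingOp}) and vice versa, so both $L_\alpha$ and $M_\alpha$ depend only on values at objects of strictly smaller degree. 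The technical lemmas one needs are that the latching and matching objects of a functor assembled in this way agree with those fed into the construction, and that the assembly is functorial in natural transformations; these (Hovey's 5.2.3--5.2.4) are the backbone of the whole proof.

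Granting this, the factorizations and lifts are built degree by degree. To factor $f\colon X\to Y$, assume inductively a factorization $X\to Z\to Y$ over objects of degree $<n$; for each $\alpha$ with $\deg\alpha=n$ the objects $L_\alpha Z$ and $M_\alpha Z$ are already defined, and I would factor the relative map
\[
X_\alpha\coprod_{L_\alpha X}L_\alpha Z\;\longrightarrow\;M_\alpha Z\times_{M_\alpha Y}Y_\alpha
\]
in $\mathbf{A}$ either as a cofibration followed by a trivial fibration, or as a trivial cofibration followed by a fibration; this defines $Z_\alpha$ and, by (2)--(3), exhibits $X\to Z$ and $Z\to Y$ as the two halves of the desired Reedy factorization at $\alpha$, and at a limit ordinal one passes to the limit of the tower of partial factorizations. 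For the lifting axioms, given a Reedy (trivial) cofibration $A\to B$, a Reedy (trivial) fibration $X\to Y$ and a commuting square, one constructs the lift $B\to X$ degree by degree: at $\alpha$ the outstanding problem is a lifting problem in $\mathbf{A}$ between the relative latching map of $A\to B$ at $\alpha$ (a (trivial) cofibration by (2)) and the relative matching map of $X\to Y$ at $\alpha$ (a (trivial) fibration by (3)), which is solvable, and the solutions cohere by the same lemmas.

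The main obstacle is not any individual inductive step — each is immediate from the model axioms for $\mathbf{A}$ — but the bookkeeping in the setup: proving that the degreewise choices glue to an honest functor and to natural transformations, i.e.\ pinning down the interaction of the latching and matching constructions with the skeletal filtration of $\Fun(\rrr^{\op},\mathbf{A})$ and controlling the passage through limit ordinals.
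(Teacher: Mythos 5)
Your outline is correct and is essentially the argument of \cite[Theorem 5.2.5]{hovey1999model}, which the paper cites without reproving: one checks retracts and two-out-of-three directly, establishes that extending a diagram across a given degree amounts to factoring $L_\alpha X\to M_\alpha X$ (with both functors depending only on lower degrees), and then constructs factorizations and lifts by transfinite induction on the degree via the relative latching and matching maps. Nothing further is needed here.
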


\begin{comment}
    \begin{Prop}[\cite{hovey1999model}, Remark 5.1.7]
\label{Prop:levelwise}
    Let $f\in\Fun\left(\rrr^{\op}, \textbf{A}\right)_{\Reedy}$ be a (trivial) cofibration (resp. fibration). Then for all $\alpha\in\rrr$, $f_\alpha$ is a (trivial) cofibration (resp. fibration).
\end{Prop}
\end{comment}

\begin{Prop}[\cite{Barwick}, Definition 3.16.3]
\label{Prop:RightFib}
A morphism of Reedy categories $\mathcal{R}\xrightarrow{F}\mathcal{S}$ is a right fibration if and only if for any model category $\textbf{A}$ the adjunction
\[
\begin{tikzcd}
\Fun\left(\mathcal{S}^{\op}, \textbf{A}\right)_{\Reedy} \arrow[swap, yshift=-0.5ex]{r}{F^*}& \Fun\left(\mathcal{R}^{\op}, \textbf{A}\right)_{\Reedy} \arrow[yshift=0.5ex, swap]{l}{F_!}
\end{tikzcd}
\]
\noindent is a Quillen adjunction. 
\par It is a left fibration if and only if for any model category $\textbf{A}$ the adjunction
\[
\begin{tikzcd}
\Fun\left(\mathcal{R}^{\op}, \textbf{A}\right)_{\Reedy} \arrow[yshift=-0.5ex,swap]{r}{F_*}& \Fun\left(\mathcal{S}^{\op}, \textbf{A}\right)_{\Reedy} \arrow[yshift=0.5ex,swap]{l}{F^*}
\end{tikzcd}
\]
\noindent is a Quillen adjunction. 
\end{Prop}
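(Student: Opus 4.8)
The plan is to follow the strategy behind \cite[Theorem~3.22]{Barwick}: reduce the Quillen condition to a levelwise statement about matching objects, and then recognise that statement as the empty-or-connectedness condition of Definition~\ref{Def:RightFibIffLatchingEmptyOrConnectec}. It is enough to prove the equivalence for right fibrations, since the one for left fibrations follows by applying it to $F^{\op}\colon\rrr^{\op}\to\sss^{\op}$. On the one hand, $F$ is a left fibration exactly when $F^{\op}$ is a right fibration: this is an unwinding of Definition~\ref{Def:RightFibIffLatchingEmptyOrConnectec} together with Remark~\ref{Rem:LatchingMatchingOp} and Example~\ref{Ex:ReedyCat}.2 and the fact that passing to the opposite category preserves emptiness and connectedness. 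On the other hand, under the canonical isomorphism $\Fun(\rrr^{\op},\textbf{A})^{\op}\cong\Fun((\rrr^{\op})^{\op},\textbf{A}^{\op})$, which carries the Reedy model structure to the Reedy model structure with the roles of (trivial) fibrations and (trivial) cofibrations interchanged, the adjunction $(F^*,F_*)$ is carried to $\big((F^{\op})_!,(F^{\op})^*\big)$ for the model category $\textbf{A}^{\op}$; since $\textbf{A}\mapsto\textbf{A}^{\op}$ is a bijection on model categories, the two adjunctions are Quillen for all $\textbf{A}$ simultaneously.

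Next I would carry out the reduction to a fibration condition. Since $(F^*g)_\alpha=g_{F(\alpha)}$ for every $\alpha\in\rrr$, the functor $F^*$ sends levelwise weak equivalences to levelwise weak equivalences, hence preserves all Reedy weak equivalences. By adjointness, $F_!$ preserves Reedy trivial cofibrations if and only if $F^*$ preserves Reedy fibrations, and $F_!$ preserves Reedy cofibrations if and only if $F^*$ preserves Reedy trivial fibrations; moreover any functor preserving weak equivalences and fibrations automatically preserves trivial fibrations. Combining these, $(F_!,F^*)$ is a Quillen adjunction if and only if $F^*$ preserves Reedy fibrations. By Theorem~\ref{Thm:ReedyModelStructure}.3 this last condition unwinds to: for every Reedy fibration $g\colon X\to Y$ in $\Fun(\sss^{\op},\textbf{A})_{\Reedy}$ and every $\alpha\in\rrr$, the relative matching map $X_{F(\alpha)}\to M_\alpha(F^*X)\times_{M_\alpha(F^*Y)}Y_{F(\alpha)}$ is a fibration in $\textbf{A}$. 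Note that this reformulation works for an arbitrary model category $\textbf{A}$, with no cofibrant-generation hypothesis.

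Finally, one analyses this condition. The matching object $M_\alpha(F^*X)$ is the limit of $X\circ F^{\op}$ over the opposite of the latching category $\partial(\rrr^{\to}/\alpha)$ of $\rrr$ at $\alpha$. Using the Reedy structures on the comma categories $\beta/F$ (Remark~\ref{Rem:CommaReedy}, \cite[Lemma~3.10]{Barwick}), this limit reorganises, by degree in $\sss$, into an iterated limit assembled from the matching objects $M_\delta X$ of $X$ over $\sss$ and the relative matching maps of $g$ at the objects $\delta$ lying below $F(\alpha)$, the combinatorics of the reorganisation being governed exactly by the latching categories $\partial\big((\beta/F)^{\to}/f\big)$ made explicit before the statement. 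One then proceeds by induction on $\deg_\sss(F(\alpha))$ (and, within a fixed $\sss$-degree, on $\deg_\rrr(\alpha)$): when $\partial\big((\beta/F)^{\to}/f\big)$ is empty the corresponding comparison map is an isomorphism, and when it is connected the comparison map is a base change of a relative matching map of $g$, hence a fibration, so $F^*g$ is again a Reedy fibration. Conversely, if some $\partial\big((\beta/F)^{\to}/f\big)$ is nonempty and disconnected, then already for $\textbf{A}$ the category of simplicial sets one can exhibit a Reedy fibration $g$ whose image $F^*g$ has a relative matching map that is not a fibration, the failure being visible on $\pi_0$, so that $(F_!,F^*)$ is not Quillen. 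This yields the equivalence; alternatively one may simply invoke \cite[Theorem~3.22]{Barwick}, whose combinatorial description of right and left fibrations is our Definition~\ref{Def:RightFibIffLatchingEmptyOrConnectec}. I expect the main obstacle to be the bookkeeping in this last step: correctly expressing $M_\alpha(F^*X)$ and the relative matching maps of $F^*g$ in terms of the matching objects of $X$ over $\sss$ and the Reedy-comma structure of $\beta/F$, and organising the degreewise induction so that the empty-or-connected hypothesis is applied at precisely the right place.
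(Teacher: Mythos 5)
The paper does not actually prove this statement: it is imported verbatim from \cite{Barwick}, where the Quillen-adjunction property is the \emph{definition} of a right/left fibration (Definition 3.16.3) and the combinatorial empty-or-connected condition that this paper adopts as Definition \ref{Def:RightFibIffLatchingEmptyOrConnectec} is the content of Theorem 3.22 there; so the proposition is a citation rather than an argument. Your sketch reconstructs that argument along the intended lines, and the parts you work out in detail are correct: the duality reducing the left-fibration half to the right-fibration half for $F^{\op}$ and $\mathbf{A}^{\op}$ is sound, as is the reduction of the Quillen condition to ``$F^*$ preserves Reedy fibrations'' (using that $F^*$ preserves levelwise weak equivalences, that a left adjoint preserves (trivial) cofibrations iff its right adjoint preserves (trivial) fibrations, and that preserving fibrations and weak equivalences forces preservation of trivial fibrations). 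The two steps that carry all the actual content --- the degreewise reorganisation of $M_\alpha(F^*X)$ into an iterated limit governed by the categories $\partial\bigl((\beta/F)^{\to}/f\bigr)$, and the converse construction of a Reedy fibration in simplicial sets whose pullback fails to be one when some such category is nonempty and disconnected --- are only described, not carried out, and you rightly flag them as the main obstacle. Since you also offer the fallback of invoking \cite[Theorem 3.22]{Barwick} directly, which is exactly what the paper does, the proposal is acceptable as it stands; a self-contained proof would require filling in those two steps.
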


\begin{Cor}
\label{Cor:CofibrantConstants}
A Reedy category $\rrr$ is left fibrant if and only if $\rrr^{\op}$ has cofibrant constants, i.e., for any cofibrant $X\in \textbf{A}$, the constant functor $\Delta X$ is cofibrant in $\Fun\left(\mathcal{R}^{\op}, \textbf{A}\right)_{\Reedy}$.
\end{Cor}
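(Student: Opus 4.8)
The plan is to deduce the statement from Proposition~\ref{Prop:RightFib} applied to the unique morphism of Reedy categories $F: \rrr \to \{*\}$ to the terminal category, supplemented by one short explicit computation. Under the identification $\Fun(\{*\}^{\op}, \textbf{A}) = \textbf{A}$, the restriction functor $F^*: \textbf{A} \to \Fun(\rrr^{\op}, \textbf{A})$ is precisely the constant diagram functor $\Delta$, which is a left adjoint (with right adjoint $F_* = \lim$). By Definition~\ref{Def:RightFibIffLatchingEmptyOrConnectec}, $\rrr$ is left fibrant exactly when $F$ is a left fibration, so Proposition~\ref{Prop:RightFib} shows that $\rrr$ is left fibrant if and only if $\Delta = F^*$ is left Quillen for every model category $\textbf{A}$. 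The implication ``$\rrr$ left fibrant $\Rightarrow$ $\rrr^{\op}$ has cofibrant constants'' is then immediate: a left Quillen functor preserves the initial object (being a left adjoint) and preserves cofibrations, hence carries a cofibration $\emptyset \to X$ with $X$ cofibrant to a cofibration $\emptyset \to \Delta X$, so $\Delta X$ is Reedy cofibrant.

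For the converse I would argue contrapositively. Suppose $\rrr$ is not left fibrant; by Remark~\ref{Rem:TerminalLeftFib} there is an $\alpha \in \rrr$ whose matching category $\mathcal{J} := \partial(\alpha / \rrr^{\ot})$ is non-empty and disconnected. Take $\textbf{A} = \SSet$ and $X = *$, the terminal simplicial set, which is cofibrant. The latching object $L_\alpha(\Delta X)$ computed in $\Fun(\rrr^{\op}, \textbf{A})_{\Reedy}$ is the colimit of the constant diagram at $X$ indexed by the latching category of $\rrr^{\op}$ at $\alpha$, which by Remark~\ref{Rem:LatchingMatchingOp} coincides with $\mathcal{J}$ up to an immaterial reversal of arrows. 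As the colimit of a constant diagram depends only on the set of connected components of the indexing category, $L_\alpha(\Delta X) = \coprod_{\pi_0(\mathcal{J})} *$ is a discrete simplicial set on at least two points, and the relative latching map $L_\alpha(\Delta X) \to (\Delta X)_\alpha = *$ is the codiagonal, which is not a monomorphism and hence not a cofibration of $\SSet$. Thus $\Delta X$ is not Reedy cofibrant, so $\rrr^{\op}$ lacks cofibrant constants.

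This converse is the only step that is not purely formal, and it is where I expect the main care to be needed: ``$\Delta$ preserves cofibrant objects for every $\textbf{A}$'' is a priori strictly weaker than ``$\Delta$ is left Quillen for every $\textbf{A}$'', so Proposition~\ref{Prop:RightFib} cannot be invoked directly and one genuinely needs the explicit latching-object computation together with a model category witnessing the failure of connectivity via a non-cofibrant codiagonal.
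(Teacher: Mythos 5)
Your proof is correct, and it starts from the same place as the paper's: the paper's entire proof is ``Immediate from Proposition \ref{Prop:RightFib}'', i.e.\ identify $F^*$ for $F:\rrr\to\{*\}$ with the constant-diagram functor $\Delta$ and use the left-fibration characterization. That settles the forward direction exactly as you say (a left Quillen functor preserves cofibrations and the initial object, hence cofibrant objects). Where you genuinely diverge is the converse: you rightly observe that ``$\Delta$ preserves cofibrant objects for every $\textbf{A}$'' is a priori weaker than ``$\Delta$ is left Quillen for every $\textbf{A}$'', so Proposition \ref{Prop:RightFib} does not apply verbatim, and you close the gap with the explicit computation $L_\alpha(\Delta X)\cong\coprod_{\pi_0\partial(\alpha/\rrr^{\ot})}X$ together with the witness $\textbf{A}=\SSet$, $X=*$, where the fold map out of a two-or-more-point discrete set fails to be a monomorphism. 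This is exactly the argument underlying the standard reference (Hirschhorn, Theorem 15.10.8) that the paper is implicitly leaning on, and your version is self-contained where the paper's is not; the only cost is length. The computation is sound: by Remark \ref{Rem:LatchingMatchingOp} the latching category of $\rrr^{\op}$ at $\alpha$ is the opposite of the matching category of $\rrr$ at $\alpha$, the colimit of a constant diagram over a non-empty index category depends only on its set of components, and cofibrancy of $\Delta X$ amounts to each $L_\alpha(\Delta X)\to X$ being a cofibration. So your proposal is a correct and, on the converse direction, a more complete argument than the one printed.
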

\begin{proof}
Immediate from Proposition \ref{Prop:RightFib}.
\end{proof}

\par Our particular interest in right fibrations comes from the following theorem.

\begin{Thm}[\cite{Barwick}, Corollary 3.50]
\label{Thm:RightFibImpliesQuillen}
Suppose $\textbf{A}$ is a symmetric monoidal model category and $\rrr$ is a Reedy category equipped with a monoidal product
\[\vee: \rrr\times \rrr \to \rrr\]
\noindent that is a right fibration of Reedy categories. Then the Day convolution product
$\dayconv$ is a Quillen bifunctor with respect to the Reedy model structure.
\end{Thm}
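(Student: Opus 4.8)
The plan is to realise the Day convolution as a composite $\dayconv = \vee_! \circ \boxtimes$, where $\boxtimes$ is a Quillen bifunctor and $\vee_!$ is a left Quillen functor, and then invoke the formal fact that such a composite is again a Quillen bifunctor. To set this up I would first equip $\rrr \times \rrr$ with the product Reedy structure of Example~\ref{Ex:ReedyCat}. Since $\vee$ is assumed to be a right fibration of Reedy categories, Proposition~\ref{Prop:RightFib} then tells us that the left Kan extension
\[
\vee_! = \Lan_{\vee^{\op}} \colon \Fun\big((\rrr \times \rrr)^{\op}, \mathbf{A}\big)_{\Reedy} \longrightarrow \Fun\big(\rrr^{\op}, \mathbf{A}\big)_{\Reedy},
\]
that is, the left adjoint to restriction along $\vee^{\op}$, is left Quillen. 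On the other hand, the coend formula for the Day convolution exhibits $X \dayconv Y$ as $\vee_!$ applied to the objectwise external tensor product $X \boxtimes Y$, where $\boxtimes \colon \Fun(\rrr^{\op}, \mathbf{A}) \times \Fun(\rrr^{\op}, \mathbf{A}) \to \Fun\big((\rrr \times \rrr)^{\op}, \mathbf{A}\big)$ is given by $(X \boxtimes Y)(a, b) = X(a) \otimes Y(b)$; this identification is a routine manipulation of coends. It will then remain to prove that $\boxtimes$ is a Quillen bifunctor for the Reedy model structures, since post-composing a Quillen bifunctor with a left Quillen functor --- which preserves pushouts, being a left adjoint, as well as (trivial) cofibrations --- again yields a Quillen bifunctor.

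The key step is a computation of latching objects of external products. Fixing $(\alpha, \beta) \in \rrr \times \rrr$, I would use that the latching category of the product Reedy category $\rrr \times \rrr$ at $(\alpha, \beta)$ is the product of the two relevant slice categories of $\rrr$ at $\alpha$ and at $\beta$ --- each of which has the corresponding identity as terminal object --- with the terminal object removed; this is covered by the two full subcategories in which only one of the two terminal objects is removed, and their intersection is the full subcategory in which both are removed. Moreover, since non-identity morphisms in an inverse subcategory strictly lower degree, no morphism of the product latching category crosses from one of these two subcategories to the other, so taking the colimit of $X \boxtimes Y$ over this cover turns the union into a pushout; using the terminal objects present in each slice it turns each piece into a tensor product, and one obtains
\[
L_{(\alpha, \beta)}(X \boxtimes Y) \;\cong\; \big(L_\alpha X \otimes Y(\beta)\big) \amalg_{\,L_\alpha X \otimes L_\beta Y\,} \big(X(\alpha) \otimes L_\beta Y\big).
\]
Thus the latching map $L_{(\alpha, \beta)}(X \boxtimes Y) \to X(\alpha) \otimes Y(\beta)$ is exactly the pushout-product in $\mathbf{A}$ of the two latching maps $L_\alpha X \to X(\alpha)$ and $L_\beta Y \to Y(\beta)$. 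Carrying out the same bookkeeping for a pair of maps $f \colon X \to X'$ and $g \colon Y \to Y'$ in $\Fun(\rrr^{\op}, \mathbf{A})$, I would deduce that the relative latching map at $(\alpha, \beta)$ of the pushout-product $f \,\square\, g$, formed in $\Fun\big((\rrr \times \rrr)^{\op}, \mathbf{A}\big)$, is the pushout-product in $\mathbf{A}$ of the relative latching map of $f$ at $\alpha$ with that of $g$ at $\beta$.

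With this identification the conclusion is formal. If $f$ and $g$ are Reedy cofibrations, then by Theorem~\ref{Thm:ReedyModelStructure} all their relative latching maps are cofibrations in $\mathbf{A}$, so the pushout-product axiom for the symmetric monoidal model category $\mathbf{A}$ makes every relative latching map of $f \,\square\, g$ a cofibration in $\mathbf{A}$; hence $f \,\square\, g$ is a Reedy cofibration, again by Theorem~\ref{Thm:ReedyModelStructure}. If in addition $f$ (or $g$) is a trivial cofibration, the same axiom makes the relevant relative latching maps trivial cofibrations, so $f \,\square\, g$ is a trivial Reedy cofibration. Therefore $\boxtimes$ is a Quillen bifunctor, and composing with $\vee_!$ shows that $\dayconv = \vee_! \circ \boxtimes$ is one as well.

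I expect the latching-object identification of the second paragraph to be the only genuinely substantive step: one must carefully unwind the colimit over the product latching category and verify that the passage from the union of full subcategories to a pushout --- together with its compatibility with the structure maps of $f$, $g$, and of the various latching objects --- really goes through. Everything else is formal: the coend identity $\dayconv = \vee_! \circ \boxtimes$, the behaviour of $\vee_!$ supplied by Proposition~\ref{Prop:RightFib}, and the final assembly, which uses only that left adjoints preserve pushouts together with the pushout-product axiom in $\mathbf{A}$.
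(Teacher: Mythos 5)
The paper does not prove this statement itself --- it is quoted as \cite{Barwick}, Corollary 3.50 --- and your reconstruction is correct and is essentially the argument given there: factor $\dayconv$ as $\vee_!\circ\boxtimes$, identify the relative latching maps of an external pushout-product over the product Reedy category with pushout-products of relative latching maps in $\mathbf{A}$, and use the right-fibration hypothesis via Proposition \ref{Prop:RightFib} to make $\vee_!$ left Quillen. (Only a cosmetic slip: the no-crossing argument for the cover of the punctured latching category should invoke that non-identity morphisms of the \emph{direct} part raise degree, so that none of them admits a section; the decomposition itself is the standard one.)
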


\par Note that this theorem only concerns the monoidal product. However, in a monoidal model category also the monoidal unit is required to be homotopically well-behaved.

\begin{Def}[\cite{hovey1999model}, Definition 4.2.6]
\label{Def:ModelMonoidal}
A \emph{monoidal model category} is a monoidal closed category $(\textbf{A},\otimes,\mathbb{I})$ with a model structure on $\textbf{A}$, such that the following conditions hold.
\begin{enumerate}
    \item The monoidal product $\otimes: \textbf{A}\times \textbf{A} \to \textbf{A}$ is a Quillen bifunctor.
    \item Let $q: Q\mathbb{\mathbb{I}} \overset{\sim}{\twoheadrightarrow} \mathbb{\mathbb{I}}$ be the cofibrant replacement of the unit $\mathbb{\mathbb{I}}$. Then the natural maps $q\otimes X: Q\mathbb{\mathbb{I}}\otimes X \to \mathbb{\mathbb{I}}\otimes X$ and $X\otimes q: X \otimes Q\mathbb{\mathbb{I}} \to X \otimes \mathbb{\mathbb{I}}$ are  weak equivalences for all cofibrant $X$. 
\end{enumerate}
\end{Def}

\begin{Rem}
\label{Rem:AnyResolution}
Observe that by standard model category theoretical arguments, condition (b) is equivalent to the statement: for \emph{any} cofibrant replacement $\Bar{q}: \Bar{\mathbb{I}} \xrightarrow{\sim} \mathbb{I}$ of the monoidal unit $\mathbb{I}$ and any cofibrant object $X$, $X\otimes \overline{q}$ and $\overline{q}\otimes X$ are weak equivalences.
\end{Rem}
%\pagebreak
\par In the spirit of Proposition \ref{Prop:RightFib} and \cite[Definition 15.10.1]{hirschhorn2003model}, we establish the following definition.

\begin{Def}
\label{Def:ReedyMonoidal}
Let $\rrr$ be a Reedy category with a compatible monoidal structure $\left(\vee, I\right)$. The category $\rrr$ is \emph{Reedy monoidal} if for any symmetric monoidal model category $\textbf{A}$, $\Fun\left(\rrr^{\op}, \textbf{A}\right)_{\Reedy}$ is a monoidal model category when equipped with Day convolution.
\end{Def}

\begin{Thm}
\label{Thm:ReedyMonoidal}
Let $\rrr$ be a left fibrant Reedy category with a compatible monoidal structure $(\vee,I)$. If $I$ is a terminal object and $\rrr$ is direct divisible with respect to $\vee$, then $\rrr$ is Reedy monoidal.
\end{Thm}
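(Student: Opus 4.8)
The plan is to verify the two conditions of Definition~\ref{Def:ModelMonoidal} for $\Fun(\rrr^{\op},\textbf{A})_{\Reedy}$ equipped with the Day convolution $\dayconv$, for an arbitrary symmetric monoidal model category $\textbf{A}$. That this is a closed monoidal category is Day's theorem \cite{day1970closed} (it uses only that $\textbf{A}$ is symmetric monoidal closed and bicomplete and that $\rrr$ is small monoidal), so the real content is the homotopical conditions (a) and (b).

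Condition (a), that $\dayconv$ is a Quillen bifunctor, assembles from the earlier results. Compatibility of $(\vee,I)$ says precisely that $\vee\colon\rrr\times\rrr\to\rrr$ is a morphism of Reedy categories, for the product Reedy structure of Example~\ref{Ex:ReedyCat}.2; and direct divisibility says precisely that its restriction $\vee^{\to}\colon\rrr^{\to}\times\rrr^{\to}\to\rrr^{\to}$ to the direct subcategories is a discrete fibration. Proposition~\ref{Prop:DirectDivImpliesRightFib} then yields that $\vee$ is a right fibration of Reedy categories, and Theorem~\ref{Thm:RightFibImpliesQuillen} gives that $\dayconv$ is a Quillen bifunctor.

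For condition (b) I would first identify the monoidal unit of $\dayconv$, which in general is the presheaf $\alpha\mapsto\coprod_{\rrr(\alpha,I)}\mathbb{I}_{\textbf{A}}$ (the set-copower of the unit $\mathbb{I}_{\textbf{A}}$ of $\textbf{A}$). Since $I$ is terminal in $\rrr$, each set $\rrr(\alpha,I)$ is a singleton, so this unit is the constant functor $\Delta\mathbb{I}_{\textbf{A}}$. Pick a cofibrant replacement $q\colon Q\mathbb{I}_{\textbf{A}}\xrightarrow{\sim}\mathbb{I}_{\textbf{A}}$ in $\textbf{A}$; then $\Delta q\colon\Delta Q\mathbb{I}_{\textbf{A}}\to\Delta\mathbb{I}_{\textbf{A}}$ is a levelwise, hence Reedy, weak equivalence by Theorem~\ref{Thm:ReedyModelStructure}(1), and since $\rrr$ is left fibrant, Corollary~\ref{Cor:CofibrantConstants} says $\rrr^{\op}$ has cofibrant constants, so $\Delta Q\mathbb{I}_{\textbf{A}}$ is Reedy cofibrant. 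Thus $\Delta q$ is a cofibrant replacement of the Day unit, and by Remark~\ref{Rem:AnyResolution} it suffices to show $\Delta q\dayconv X$ and $X\dayconv\Delta q$ are weak equivalences for every Reedy cofibrant $X$. For this I would use that $\dayconv$ is bilinear with respect to the levelwise tensoring of presheaves by objects of $\textbf{A}$ — immediate from the coend defining $\dayconv$, since $V\otimes(-)$ preserves colimits for each $V\in\textbf{A}$ — together with the fact that $\Delta V$ is the levelwise tensor of the Day unit by $V$; combining these gives a natural isomorphism identifying $\Delta V\dayconv X$ and $X\dayconv\Delta V$ with the presheaf $\alpha\mapsto V\otimes X_{\alpha}$. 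Under this identification $\Delta q\dayconv X$ is, at each $\alpha$ and up to the unit isomorphism of $\textbf{A}$, the map $q\otimes X_{\alpha}\colon Q\mathbb{I}_{\textbf{A}}\otimes X_{\alpha}\to\mathbb{I}_{\textbf{A}}\otimes X_{\alpha}$. As a Reedy cofibrant presheaf is levelwise cofibrant (a standard feature of the Reedy structure, see \cite{hovey1999model}), each $X_{\alpha}$ is cofibrant in $\textbf{A}$, so $q\otimes X_{\alpha}$ is a weak equivalence by the unit axiom of $\textbf{A}$ (in the form of Remark~\ref{Rem:AnyResolution}); hence $\Delta q\dayconv X$, and symmetrically $X\dayconv\Delta q$, is a Reedy weak equivalence. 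With (a) and (b) established for arbitrary $\textbf{A}$, Definition~\ref{Def:ModelMonoidal} holds, i.e.\ $\rrr$ is Reedy monoidal.

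I expect the crux to be conceptual rather than computational: once the Day unit is correctly identified with the constant functor (which is exactly where terminality of $I$ enters) and the $\textbf{A}$-bilinearity of $\dayconv$ is in place, condition (b) collapses to the unit axiom of $\textbf{A}$ applied levelwise. The rest is bookkeeping — matching up the Reedy structures on $\rrr$, $\rrr^{\op}$ and $\rrr\times\rrr$ when invoking Proposition~\ref{Prop:DirectDivImpliesRightFib} and Corollary~\ref{Cor:CofibrantConstants}, and recalling that Reedy cofibrant presheaves are levelwise cofibrant.
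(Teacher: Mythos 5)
Your proposal is correct and follows essentially the same route as the paper: condition (a) via Proposition~\ref{Prop:DirectDivImpliesRightFib} and Theorem~\ref{Thm:RightFibImpliesQuillen}, and condition (b) by identifying the Day unit with $\Delta\mathbb{I}$ using terminality of $I$, invoking Corollary~\ref{Cor:CofibrantConstants} for cofibrancy of $\Delta Q\mathbb{I}$, and reducing to the unit axiom of $\textbf{A}$ levelwise via the isomorphism $(X\dayconv\Delta q)_\alpha\cong X_\alpha\otimes q$ and levelwise cofibrancy of Reedy cofibrant objects. Your write-up merely spells out a few steps (the bilinearity argument, the appeal to Remark~\ref{Rem:AnyResolution}) that the paper leaves implicit.
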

\begin{proof}
\par By Proposition \ref{Prop:DirectDivImpliesRightFib} and Theorem \ref{Thm:RightFibImpliesQuillen} we obtain condition (a) of Definition \ref{Def:ModelMonoidal}. Let $(\textbf{A},\otimes,\mathbb{I})$ be a symmetric monoidal model category. Since $I$ is terminal, the Day convolution unit of $\Fun\left(\rrr^{\op}, \textbf{A}\right)$ is the constant diagram $\Delta\mathbb{I}$. Let $q: Q\mathbb{I} \to \mathbb{I}$ be the cofibrant replacement of $\mathbb{I}$ in $\textbf{A}$. Then by Corollary \ref{Cor:CofibrantConstants}, $\Delta\left(Q\mathbb{I}\right)$ is cofibrant and moreover $\Delta\left(Q\mathbb{I}\right) \to \Delta \mathbb{I}$ is a levelwise weak equivalence, and thus a weak equivalence in $\Fun\left(\rrr^{\op}, \textbf{A}\right)_{\Reedy}$. Finally, observe that $\left(X \dayconv \Delta q\right)_\alpha\cong X_{\alpha}\otimes q$ for all $\alpha\in \rrr$ and $X\in \textbf{A}$ and thus the final result follows by the assumption that $\textbf{A}$ is monoidal model and by \cite[Remark 5.1.7]{hovey1999model}.
\end{proof}

\section{The category of necklaces}

\par Let us now turn our attention to the category $\nec$ of necklaces, as considered in \cite{baues1980geometry}\cite{dugger2011rigidification}. In $\S 3.1$, we give a self-contained account of a number of relevant factorization classes of morphisms, making use of a combinatorial description of necklaces put forth in \cite{grady2023extended}\cite{lowen2023enriched}. In $\S 3.2$, we go on to show that the $(\text{epi},\text{mono})$-factorization can be made into a Reedy monoidal structure (see Definition \ref{Def:ReedyMonoidal}) with an appropriate degree function (Theorem \ref{Thm:NecIsReedyMonoidal}). This makes use of the notion of dimension of a necklace, which plays a fundamental role in \cite{rivera2018cubical}. The interaction between the classes of epimorphisms and monomorphisms and the dimension, which constitutes an important part of our proof, is implicit in loc. cit as we explain in Remarks \ref{Rem:x} and \ref{Rem:y}, and a decomposition of $\nec$ by degree is used in \cite{CombinatorialPath}.

\subsection{A combinatorial description of necklaces}

\par Let us denote by $\SSet_{*,*} = (\partial\Delta^{1}/\SSet)$ the category of bipointed simplicial sets. This is a monoidal category when equipped with the wedge product
\[
\left(X, x_0,x_1\right)\vee \left(Y, y_0,y_1\right)=\left(X\coprod_{x_1=y_0} Y, x_0, y_1\right)
\]
\noindent and with the $0$-simplex $\left(\Delta^0,0,0\right)$ as monoidal unit . We consider the standard $n$-simplices $\Delta^{n}$ as bipointed at $0$ and $n$. To simplify notation, and as it will not bring about any confusion, we will simply denote $\left(\Delta^n, 0, n\right)$ by $\Delta^n$.

\begin{Def}
\label{Def:Necklaces}
The \emph{category of necklaces} $\nec$ is the full subcategory of $\SSet_{*,*}$ spanned by objects $X$ of shape
\[
X=\Delta^{n_1}\vee \cdots \vee \Delta^{n_k}
\]
\noindent with $k\geq 0$ and $n_i> 0$, called \emph{necklaces}. The simplices $\Delta^{n_i}$ are the \emph{beads} of the necklace $X$. If $k=0$, the corresponding necklace is denoted $\Delta^0$. We call the number of beads $k$ the \emph{bead length} of $X$, denoted $\ell(X)$; and the sum $n_1+\cdots+n_k$ the \emph{spine length} of $X$, denoted $\|X\|$.
\end{Def}

\par Independently in \cite[Proposition 3.4.2]{grady2023extended} and \cite[Proposition 3.4]{lowen2023enriched}, the following alternative combinatorial description of necklaces was given. Consider the subcategory $\fint\subseteq \simp$ containing all morphisms $f: [m]\rightarrow [n]$ such that $f(0) = 0$ and $f(m) = n$.

\begin{Prop}
\label{Prop:CombinatorialNecklaces}
\par The category of necklaces $\nec$ is equivalent to the category defined as follows:
\begin{itemize}
    \item the objects are pairs $(T,p)$ with $p \geq 0$ and $\left\{0,p\right\} \subseteq T \subseteq [p] $;
    \item the morphisms $(T,p) \to (S,q)$ are morphisms $f:[p]\to [q]\in \fint$ such that $S\subseteq f(T)$;
    \item composition and identities are given as in $\Delta_f$.
\end{itemize}
\noindent Moreover, under this equivalence, the wedge $\vee$ corresponds to 
\[\left(T,p\right)\vee\left(S,q\right)=\left(T \cup (p+S),p+q\right)\]
\noindent where $p + S = \left\{p + s \text{ }\rvert\text{} s\in S \right\}$.
\end{Prop}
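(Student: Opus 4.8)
The plan is to build a fully faithful functor $\Phi$ from the combinatorial category of the statement — write it $\mathcal{N}'$ — into $\SSet_{*,*}$ whose image is exactly $\nec$, and then transport the wedge. On objects, given $(T,p)$ with $T=\{0=t_0<t_1<\dots<t_a=p\}$, let $\Phi(T,p)$ be the simplicial subset of $\Delta^p$ generated by the faces $\Delta^{\{t_{i-1},\dots,t_i\}}$ for $1\le i\le a$, bipointed at $0$ and $p$. Since the iterated wedge $\Delta^{t_1-t_0}\vee\cdots\vee\Delta^{t_a-t_{a-1}}$ is canonically this simplicial subset, $\Phi(T,p)$ is a necklace, and conversely every necklace arises this way (with $\Delta^0=\Phi(\{0\},0)$), so $\Phi$ will be surjective onto $\nec$ on objects. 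The basic lemma I would record first is the description of the simplices of $\Phi(T,p)$: a tuple of vertices of $\Delta^p$ is a simplex of $\Phi(T,p)$ if and only if all its entries lie in a single interval $[t_{i-1},t_i]$, equivalently no element of $T$ lies strictly between its minimum and its maximum. In particular every vertex of $\Delta^p$ and every edge $\{i,i+1\}$ belongs to $\Phi(T,p)$.

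On morphisms, given $f\colon[p]\to[q]$ in $\fint$ with $S\subseteq f(T)$, I claim the induced simplicial map $\Delta^p\to\Delta^q$ restricts to a bipointed map $\Phi(T,p)\to\Phi(S,q)$. Bipointedness is just $f(0)=0$, $f(p)=q$. For the restriction, fix a bead interval $[t_{i-1},t_i]$ of $(T,p)$; by monotonicity $f([t_{i-1},t_i])\subseteq[f(t_{i-1}),f(t_i)]$, and no element of $S$ lies strictly between $f(t_{i-1})$ and $f(t_i)$: such an element would be $f(t_m)$ for some $m$ since $S\subseteq f(T)$, which (again by monotonicity) forces $t_{i-1}<t_m<t_i$, impossible as $t_{i-1},t_i$ are consecutive in $T$. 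By the simplex description, $f([t_{i-1},t_i])$ spans a simplex of $\Phi(S,q)$, so the image of every simplex of $\Phi(T,p)$ is a simplex of $\Phi(S,q)$. Functoriality of $\Phi$ is immediate from that of $[p]\mapsto\Delta^p$.

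For full faithfulness, injectivity holds because $\Phi(T,p)\hookrightarrow\Delta^p$ is surjective on vertices, so a simplicial map out of $\Phi(T,p)$ is determined by its vertex map. For surjectivity, let $g\colon\Phi(T,p)\to\Phi(S,q)$ be bipointed with vertex map $f\colon[p]\to[q]$. Since each $\{i,i+1\}$ is an edge of $\Phi(T,p)$, its image under $g$ is an edge of $\Phi(S,q)\subseteq\Delta^q$, so $f(i)\le f(i+1)$; thus $f$ is an order morphism, and $f(0)=0$, $f(p)=q$, so $f\in\fint$. Finally $S\subseteq f(T)$: the weakly increasing sequence $0=f(t_0)\le f(t_1)\le\cdots\le f(t_a)=q$ cannot step strictly over a joint $s_j$ of $(S,q)$, for if $f(t_{i-1})<s_j<f(t_i)$ then the image under $g$ of the bead on $\{t_{i-1},\dots,t_i\}$ would contain two vertices separated by $s_j$, hence could not be a simplex of $\Phi(S,q)$ by the simplex description; so every $s_j$ occurs among the $f(t_i)$, i.e. $S\subseteq f(T)$, and $\Phi(f)=g$. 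This gives the equivalence $\mathcal{N}'\simeq\nec$. Transporting the wedge is then bookkeeping: writing out joints shows $\Phi(T,p)\vee\Phi(S,q)=\Phi\big((T\cup(p+S),\,p+q)\big)$ naturally in both variables, and $\Delta^0=\Phi(\{0\},0)$ matches the units. The only real content is full faithfulness, and within it the two halves of the condition $S\subseteq f(T)$ — its sufficiency (the restriction of $\Phi$ on morphisms) and its necessity (the ``no bead straddles a joint'' argument); once the simplex description of $\Phi(T,p)$ is in hand, everything else is routine.
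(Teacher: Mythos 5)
Your argument is correct. The paper itself gives no proof of this proposition, deferring to \cite{grady2023extended} and \cite{lowen2023enriched}, so there is nothing to compare against line by line; your construction --- realizing $(T,p)$ as the subcomplex of $\Delta^p$ generated by the faces on the intervals $[t_{i-1},t_i]$, characterizing its simplices as those tuples with no element of $T$ strictly between their extremes, and then running the two halves of the condition $S\subseteq f(T)$ through that characterization --- is the natural self-contained proof and supplies exactly what the citation elides. One small point of hygiene: in the faithfulness/fullness step, the fact you actually need is that a simplicial map into a subcomplex of $\Delta^q$ (the nerve of the poset $[q]$) is determined by its vertex map; surjectivity of $\Phi(T,p)\hookrightarrow\Delta^p$ on vertices is what lets you recover $f$ from $\Phi(f)$, but it is the nerve-of-a-poset property of the \emph{target} that justifies ``determined by its vertex map.'' With that justification made explicit, the proof is complete.
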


\par Under this equivalence of categories, we can identify a ``combinatorial'' necklace $(\{0=t_0<t_1<\cdots<t_k=p\}, p)$ with the ``geometric'' necklace $\Delta^{t_1-t_0}\cvee \Delta^{t_k-t_{k-1}}$. Note that in particular, $(\{0 < n\},n)$ corresponds to the $n$-simplex $\Delta^{n}$, while $([p],p)$ corresponds to a sequence of edges, that we call \emph{spines}. In the rest of this section we establish a correspondence between the combinatorial and geometric descriptions of several natural concepts. 

\begin{Def}[\cite{lowen2023enriched}, Def. 3.5]
\label{Def:ActiveInert}
Let $f:(T, p) \to (S, q)$ be a map of necklaces. The map $f$ is \emph{inert} if $p=q$ and $f=\id_{[p]}$. It is \emph{active} if $f(T)=S$.
\end{Def}

\par Consider an arbitrary necklace map $f:(T, p) \to (S, q)$. Then it factors uniquely as an active map followed by an inert map:

\begin{equation}
\label{eq:ActInertFact}
   (T,p)\xrightarrow{f^{\ac}} (f(T), q) \xrightarrow{f^{\ine}} (S,q)
\end{equation}

\begin{Lem}
A map of necklaces $X=(T,p) \xrightarrow{f} Y=(S,q)$ is inert if and only if it is the wedge $\iota_1\vee\cdots\vee \iota_k$ of the inclusions
\[
\iota_i: \Delta^{n_i^1}\vee\cdots\vee\Delta^{n_i^{l_i}}\hookrightarrow\Delta^{n_i^1+\cdots+{n_i^{l_i}}}
\]
\end{Lem}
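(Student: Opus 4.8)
The plan is to read off both implications directly from the combinatorial model of Proposition~\ref{Prop:CombinatorialNecklaces}. Recall that, by Definition~\ref{Def:ActiveInert}, a map $f\colon(T,p)\to(S,q)$ is inert precisely when $q=p$ and $f=\id_{[p]}$, so that the only remaining datum is an inclusion $S\subseteq T$ of the two distinguished subsets of $[p]$; geometrically, $(S,p)$ is obtained from $(T,p)$ by forgetting some of the joining vertices, i.e.\ by merging consecutive beads. The lemma asserts that this merging is exactly what an iterated wedge of the ``collapse'' maps $\iota_i$ performs.

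For the ``if'' direction I would first check that each $\iota_i$ is inert. Writing $N_i=n_i^1+\cdots+n_i^{l_i}$, the source of $\iota_i$ corresponds under Proposition~\ref{Prop:CombinatorialNecklaces} to $(\{0,n_i^1,n_i^1+n_i^2,\dots,N_i\},N_i)$ and its target to $(\{0,N_i\},N_i)$; since $\{0,N_i\}$ lies inside the larger subset, $\iota_i=\id_{[N_i]}$ is a legitimate (inert) morphism of necklaces. Next I would recall the effect of $\vee$ on morphisms (only its effect on objects is recorded in the excerpt): under the equivalence of Proposition~\ref{Prop:CombinatorialNecklaces}, the wedge of $f_j\colon(T_j,p_j)\to(S_j,q_j)$ in $\fint$, $j=1,2$, is the map $[p_1+p_2]\to[q_1+q_2]$ restricting to $f_1$ on $[p_1]$ and to $q_1+f_2(\,\cdot\,-p_1)$ on $\{p_1,\dots,p_1+p_2\}$. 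From this it is immediate that a wedge of identities is an identity and that $S_1\cup(q_1+S_2)\subseteq(f_1\vee f_2)(T_1\cup(p_1+T_2))$, so $\iota_1\vee\cdots\vee\iota_k$ is again of the form $\id_{[p]}$ with source subset containing target subset, hence inert.

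For the ``only if'' direction, let $f\colon X=(T,p)\to Y=(S,q)$ be inert, so $q=p$, $f=\id_{[p]}$ and $S\subseteq T$. Write $T=\{0=t_0<t_1<\cdots<t_m=p\}$ and $S=\{0=s_0<s_1<\cdots<s_k=p\}$, and choose $0=a_0<a_1<\cdots<a_k=m$ with $s_j=t_{a_j}$. Under the identification recalled after Proposition~\ref{Prop:CombinatorialNecklaces}, $X=\Delta^{t_1-t_0}\cvee\Delta^{t_m-t_{m-1}}$; grouping its beads according to the partition $\{1,\dots,m\}=\{1,\dots,a_1\}\sqcup\cdots\sqcup\{a_{k-1}+1,\dots,a_k\}$ and setting $l_j=a_j-a_{j-1}$ and $n_j^r=t_{a_{j-1}+r}-t_{a_{j-1}+r-1}$, we obtain $X=X_1\vee\cdots\vee X_k$ with $X_j=\Delta^{n_j^1}\cvee\Delta^{n_j^{l_j}}$. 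Since $s_j-s_{j-1}=t_{a_j}-t_{a_{j-1}}=n_j^1+\cdots+n_j^{l_j}$, we likewise get $Y=\Delta^{n_1^1+\cdots+n_1^{l_1}}\vee\cdots\vee\Delta^{n_k^1+\cdots+n_k^{l_k}}$. Then $f=\iota_1\vee\cdots\vee\iota_k$ for the maps $\iota_j\colon X_j\to\Delta^{n_j^1+\cdots+n_j^{l_j}}$ from the first part, because both sides are simply the morphism $\id_{[p]}\colon(T,p)\to(S,p)$ (with $X=\Delta^0$ giving the empty wedge).

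The only genuine subtlety, and the point I would write out most carefully, is exactly the description of $\vee$ on morphisms used above, together with the routine verification that it is well defined on $\fint$ and compatible with the distinguished subsets; once that is settled, both implications reduce to the bookkeeping just sketched and no further obstacle arises.
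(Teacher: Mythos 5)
Your proposal is correct and follows essentially the same route as the paper: both read the statement off the combinatorial model, identifying an inert map with an inclusion $S\subseteq T$ and decomposing $T$ into the intervals between consecutive elements of $S$ to produce the beads $n_i^j$. The paper's proof only records this ``only if'' bookkeeping and leaves the converse and the description of $\vee$ on morphisms implicit, whereas you spell both out; this is more complete but not a different argument.
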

\begin{proof}
By hypothesis $p=q$ so we are mainly concerned with the inclusion $S\subseteq T$. Denote $ S=\{0=s_0<s_1<\cdots <s_k=q\}$ and write
\[
T=\{0<\cdots<s_i=t_{i}^0<t_{i}^1<\cdots < t_{i}^{l_i}=s_{i+1}<\cdots<p\}
\]
\noindent Then we can set $n_i^j=t^j_i-t^{j-1}_i$ and thus $n_i^1+\cdots+{n_i^{l_i}}=s_i-s_{i-1}$.
\end{proof}
\begin{Lem}
A map of necklaces $X=(T,p) \xrightarrow{f} Y=(S,q)$ is active if and only if it is the wedge $f_1\vee\cdots\vee f_k$ of necklace maps $f_i: \Delta^{n_i}\to \Delta^{m_i}$ induced by morphisms $[n_{i}]\rightarrow [m_{i}]$ in $\fint$.
\end{Lem}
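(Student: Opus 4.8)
The plan is to prove both implications directly from the combinatorial model of Proposition \ref{Prop:CombinatorialNecklaces}, in the same bookkeeping style as the proof of the preceding lemma on inert maps. Throughout, write $T = \{0 = t_0 < t_1 < \cdots < t_k = p\}$, so that $X = \Delta^{n_1}\cvee\Delta^{n_k}$ with $n_i = t_i - t_{i-1} > 0$.

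For the ``if'' direction, recall that $\vee$ is a monoidal product, hence in particular functorial, so any wedge $f_1\vee\cdots\vee f_k$ of necklace maps is again a necklace map. It then suffices to observe that each $f_i\colon\Delta^{n_i}\to\Delta^{m_i}$ induced by a map in $\fint$ is automatically active — the only admissible vertex set of $\Delta^{m_i}$ is $\{0,m_i\}$, and $f_i(\{0,n_i\}) = \{f_i(0),f_i(n_i)\} = \{0,m_i\}$ by the defining condition of $\fint$ — and that a wedge of active maps is active, since it carries the $i$-th block of the source vertex set onto a translate of $\{0,m_i\}$ and these translates union to the vertex set of the target.

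For the ``only if'' direction, assume $f\colon(T,p)\to(S,q)$ is active and set $s_i := f(t_i)$ for $0 \le i \le k$. Because $f\in\fint$ is order-preserving with $f(0)=0$ and $f(p)=q$, we get $0 = s_0 \le s_1 \le \cdots \le s_k = q$, and activeness gives $S = f(T) = \{s_0,\dots,s_k\}$. Put $m_i := s_i - s_{i-1} \ge 0$ and define $f_i\colon[n_i]\to[m_i]$ by $f_i(x) := f(t_{i-1}+x) - s_{i-1}$. The one substantive point is that $t_{i-1} \le t_{i-1}+x \le t_i$ forces $s_{i-1} \le f(t_{i-1}+x) \le s_i$ by monotonicity, so $f_i$ is a well-defined order morphism with $f_i(0) = 0$ and $f_i(n_i) = m_i$, i.e.\ $f_i\in\fint$; unwinding the definition of $\vee$ on morphisms then shows that $f_1\vee\cdots\vee f_k$ and $f$ agree block-by-block on $[p]$, hence everywhere, and that the common target is $\bigl(\bigcup_i (s_{i-1}+\{0,m_i\}),\,q\bigr) = (S,q) = Y$.

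The only subtlety worth flagging — and it is not a genuine obstacle — is that an active map need not be injective on $T$: whenever $s_{i-1} = s_i$ we get $m_i = 0$, so the corresponding factor $f_i$ is the collapse $\Delta^{n_i}\to\Delta^0$ onto the monoidal unit (induced by the unique map $[n_i]\to[0]$, which lies in $\fint$), and wedging with $\Delta^0$ leaves the necklace unchanged; thus the decomposition always has exactly $\ell(X) = k$ factors, some of which may collapse a bead. Since everything reduces to elementary manipulations of order-preserving maps and of the concatenation formula for $\vee$, I expect no real difficulty: the proof is essentially a transcription, for the active class, of the argument already given for the inert class.
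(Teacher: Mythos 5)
Your proof is correct and follows essentially the same route as the paper, whose entire argument is the one-line prescription $n_i = t_i - t_{i-1}$, $f_i = f\rvert_{\{t_{i-1}<\cdots<t_i\}}$ — exactly your block-restriction construction in the ``only if'' direction, with the ``if'' direction and the degenerate case $m_i=0$ left implicit. Your explicit handling of the collapse $\Delta^{n_i}\to\Delta^0$ and of the converse is a welcome elaboration but not a different method.
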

\begin{proof}
\par Simply write $n_i=t_i-{t_{i-1}}$ and $f_i=f\rvert_{\{t_{i-1}<\cdots<t_i\}}$.
\end{proof}

\par Again consider an arbitrary necklace map $f:(T,p)\to (S,q)$. We can factor ${[p]} \to {[q]}$ as a surjective map followed by an injective map, ${[p]} \xrightarrow{f_1} {[r]} \xrightarrow{f_2} {[q]}$. We thus get a new way of uniquely factoring necklace maps
\begin{equation}
\label{eq:ActSurjInjFact}
    (T,p) \xrightarrow{f_1} (f_1(T),r) \xrightarrow{f_2} (S,q)
\end{equation}
\noindent as $f_1(T)\subseteq f_1(T)$ and $S \subseteq f(T)=f_2(f_1(T))$. We note that the first map is active and surjective on vertices and the second is injective on vertices. 

\begin{Lem}
\label{prop:InjectiveNecklaceMap}
A map of necklaces $X=(T,p) \xrightarrow{f} Y=(S,q)$ is a monomorphism if and only if ${[p]}\to {[q]}$ is injective.
\end{Lem}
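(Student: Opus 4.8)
The plan is to prove the biconditional in Lemma \ref{prop:InjectiveNecklaceMap} by translating ``monomorphism'' through the equivalence of Proposition \ref{Prop:CombinatorialNecklaces}, so that a necklace map $f : (T,p) \to (S,q)$ is a monomorphism in $\nec$ precisely when it is a monomorphism in the combinatorial model. Since composition and identities there are computed as in $\fint \subseteq \simp$, and since monomorphisms in $\simp$ are exactly the injective order maps, the natural guess is that monomorphisms of necklaces correspond exactly to those $f$ whose underlying $[p] \to [q]$ is injective. One direction is essentially formal: if $[p] \to [q]$ is injective, I would show that $f$ is a monomorphism in $\nec$ by a direct cancellation argument. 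Given $g, h : (R,m) \to (T,p)$ with $f \circ g = f \circ h$, the underlying maps satisfy $[m] \to [p]$ equal after postcomposition with the injection $[p] \to [q]$, hence equal; and since a necklace map is determined by its underlying order map (the subset condition $S \subseteq f(T)$ is a property, not extra data), $g = h$.

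For the converse I would argue contrapositively: suppose $[p] \to [q]$ is not injective, and produce two distinct necklace maps into $(T,p)$ that $f$ identifies. The standard trick is to exhibit a pair of sections of a degeneracy. Concretely, if $f$ is non-injective on $[p]$, then in its $(\text{surj},\text{inj})$-factorization $[p] \xrightarrow{f_1} [r] \xrightarrow{f_2} [q]$ from \eqref{eq:ActSurjInjFact}, the map $f_1$ is a nontrivial surjection, so some $i$ has $f(i) = f(i+1)$. I would then take the two coface-type maps $d^i, d^{i+1} : (T', p-1) \to (T,p)$ that respectively omit vertex $i$ or vertex $i+1$ — more precisely, the two injective maps $[p-1] \to [p]$ with image $[p]\setminus\{i\}$ and $[p]\setminus\{i+1\}$ — choosing the source subset $T' $ appropriately (e.g. $T' = $ the preimage of $T$, which still contains $0$ and $p-1$ since $f$ being in $\fint$ and $i, i+1$ not both endpoints collapsing can be arranged, or simply take $T' = [p-1]$, the spine). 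Both composites with $f$ agree because $f$ sends $i$ and $i+1$ to the same vertex, so $f \circ d^i = f \circ d^{i+1}$, yet $d^i \neq d^{i+1}$; this contradicts $f$ being monic.

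The one point needing care — and the step I expect to be the main obstacle — is the bookkeeping of the subset data: I must make sure the auxiliary maps $d^i, d^{i+1}$ (and in the forward direction, arbitrary $g,h$) are genuine necklace morphisms, i.e. that the subset conditions $S \subseteq f(T)$ type constraints are satisfied at each stage, and in particular that a choice of source necklace $(T',p-1)$ exists for which both $d^i$ and $d^{i+1}$ are defined. Taking $T' = [p-1]$ (the spine on $p-1$ edges) sidesteps most of this, since then the condition on $d^i : (T', p-1) \to (T,p)$ is just $T \subseteq d^i([p-1])$, which may fail only if $T$ contains the omitted vertex; but we are free to pick $i$ among the collapsed indices, and if $i \in T$ we can instead use a collapsed pair not in $T$, or shift to omitting a vertex not in $T$ — a short case analysis that I would spell out. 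Alternatively, and perhaps more cleanly, I would observe that it suffices to test monomorphy against maps out of simplices $\Delta^m$, reducing to the corresponding well-known statement in $\simp$ (or $\fint$) via the fully faithful functor $\nec \to \SSet_{*,*}$ and the fact that $\SSet_{*,*} = (\partial\Delta^1/\SSet)$ has monomorphisms created by the forgetful functor to $\SSet$, where monos are the levelwise injections. I would present whichever of these is shortest, likely the latter.
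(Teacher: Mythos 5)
Your forward direction (injectivity of $[p]\to[q]$ implies $f$ is monic) is correct, and the cancellation argument you give is if anything cleaner than the paper's: the paper instead factors $f$ as an active map followed by an inert one and checks that each piece is a wedge of injections, hence a monomorphism of bipointed simplicial sets. Be aware, though, that the paper reads ``monomorphism'' at the level of $\SSet_{*,*}$ (levelwise injective maps), which is why it can dismiss the other implication as immediate: $[p]$ and $[q]$ are the sets of $0$-simplices. You have set yourself the genuinely harder task of characterizing the \emph{categorical} monomorphisms of $\nec$, and there the converse really does require a construction.

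That is where your proposal has a gap. For $d^j\colon[p-1]\to[p]$ to be a morphism of $\fint$ one needs $0<j<p$ (since $d^0(0)=1$ and $d^p(p-1)=p-1$), and for it to define a necklace map out of the spine one needs $j\notin T$; so your construction requires $i,i+1\notin T$, which can simply fail. For example $f=(0,0,1)\colon(\{0,1,2\},2)\to(\{0,1\},1)$ has its unique collapsed pair $\{0,1\}$ contained in $T$, so no omitted-vertex test maps exist, yet $f$ is not monic; the deferred ``short case analysis'' and the proposed shifts do not cover this. The fallback via full faithfulness also points the wrong way: a fully faithful inclusion \emph{reflects} monomorphisms (a mono of $\SSet_{*,*}$ lying in $\nec$ is a mono of $\nec$), whereas you would need it to \emph{preserve} them, which is not automatic for a full subcategory. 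The uniform repair is to use codegeneracies instead of cofaces: if $f(i)=f(i+1)$, the surjections $s^i,s^{i+1}\colon[p+1]\to[p]$ both lie in $\fint$, both define necklace maps $([p+1],p+1)\to(T,p)$ from the spine (the condition $T\subseteq s^j([p+1])=[p]$ is automatic), they are distinct, and $f\circ s^i=f\circ s^{i+1}$ because the two composites differ only at $i+1$, where they take the values $f(i)$ and $f(i+1)$. This single construction disposes of the converse with no bookkeeping.
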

\begin{proof}
\par Under the identification provided by Proposition \ref{Prop:CombinatorialNecklaces}, ${[p]}$ and ${[q]}$ are the sets of vertices of $X$ and $Y$ respectively, and so the necessity is immediate. 
\par We now show the sufficiency. Factor $f=f^{\ine}\circ f^{\ac}$. Notice that an inert map is the wedge of inclusions, and thus a monomorphism. Now $f^{\ac}$ is the wedge of simplicial maps, which are completely determined by the values at their vertices. If ${[p]}\to {[q]}$ is injective, so will the restriction to the beads be. Thus $f^{\ac}$ is a wedge of monomorphisms and thus a monomorphism. This shows $f$ is a monomorphism as desired.
\end{proof}
\begin{Lem}
\label{prop:SurjectiveNecklaceMap}
A map of necklaces $X=(T,p) \xrightarrow{f} Y = (S,q)$ is an epimorphism if and only if it is active and ${[p]}\to {[q]}$ is surjective.
\end{Lem}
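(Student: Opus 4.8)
The plan is to read ``epimorphism'' as an epimorphism in the ambient category $\SSet_{*,*}$. Being a coslice of a presheaf topos, $\SSet_{*,*}$ has as its epimorphisms exactly the levelwise surjections (the cokernel pair of a coslice morphism is computed on underlying simplicial sets, so epimorphisms are detected there). Thus the statement to establish is: \emph{a necklace map $f\colon X = (T,p)\to Y = (S,q)$ is a levelwise surjection if and only if it is active and $[p]\to[q]$ is surjective}.

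For the implication ($\Leftarrow$) I would use the description of active maps as wedges: write $f = f_1\vee\dots\vee f_k$ with $f_i\colon\Delta^{n_i}\to\Delta^{m_i}$ induced by a morphism $[n_i]\to[m_i]$ of $\fint$. Since the beads of $X$ cover its vertices and $f$ is order-preserving, surjectivity of $[p]\to[q]$ forces each underlying $[n_i]\to[m_i]$ to be surjective in $\simp$; such a map has an order-preserving section, so the induced map of standard simplices $\Delta^{n_i}\to\Delta^{m_i}$ is a levelwise surjection. As every simplex of a wedge of necklaces lies in a single bead, a wedge of levelwise surjections is a levelwise surjection, and the claim follows for $f$.

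For the implication ($\Rightarrow$) — the substantive one — suppose $f$ is a levelwise surjection. Surjectivity on vertices is the degree-zero part. For activity, one always has $S\subseteq f(T)$, so I would argue by contradiction: if $v\in f(T)\setminus S$, then $v$ is not a juncture of $Y$, hence lies strictly inside some bead of $Y$, so $v-1,v,v+1$ all lie in that bead and $e\colon (v-1)\to(v+1)$ is a non-degenerate edge of $Y$. Writing $v = f(t_i)$ for a juncture $t_i$ of $X$ and choosing vertices $a,b$ of $X$ with $f(a)=v-1$, $f(b)=v+1$ (possible by vertex-surjectivity), monotonicity of $f$ gives $a<t_i<b$. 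Now any edge $\tilde e$ of $X$ with $f(\tilde e)=e$ must be non-degenerate with endpoints $a$ and $b$; but a non-degenerate simplex of a necklace is contained in a single bead $[t_{j-1},t_j]$, which would yield $t_{j-1}\le a<t_i<b\le t_j$, hence $t_{j-1}<t_i<t_j$ — impossible for consecutive junctures. So no such $\tilde e$ exists, contradicting levelwise surjectivity; therefore $f(T)=S$ and $f$ is active.

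The hard part will be the activity half of ($\Rightarrow$): it is the one place where the bead combinatorics genuinely enters, and the ``straddling edge'' argument must be phrased so as to exclude the degenerate edges of $X$ and to accommodate the boundary cases where $v\pm1$ is itself a juncture of $Y$ (which remains harmless, since $v\pm1$ still lies in the bead of $v$). I would also point out that this really is a statement about $\SSet_{*,*}$ rather than about $\nec$: the categorical epimorphisms internal to $\nec$ form a strictly larger class — for instance the spine inclusion $\Delta^1\vee\Delta^1\hookrightarrow\Delta^2$ is simultaneously a monomorphism and an epimorphism in $\nec$, though not active — and it is precisely testing against $\SSet_{*,*}$ that makes the factorisation \eqref{eq:ActSurjInjFact} into the desired Reedy datum.
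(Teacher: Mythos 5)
Your proof is correct, but the forward implication takes a genuinely different route from the paper's. The paper factors $f = f^{\ine}\circ f^{\ac}$ and argues that the inert part of an epimorphism is itself an epimorphism; being also a monomorphism, it must be an isomorphism (hence the identity) because $\SSet_{*,*}$ is balanced. You instead argue directly and combinatorially: assuming a juncture of $X$ maps to a non-juncture $v$ of $Y$, you exhibit the nondegenerate edge $(v-1,v+1)$ of $Y$ and show it cannot be hit, since a nondegenerate preimage would have to lie in a single bead of $X$ while straddling a juncture. Both arguments are sound; yours is more elementary and self-contained, at the price of the bead-combinatorics bookkeeping (your phrase ``with endpoints $a$ and $b$'' should just read ``with endpoints some preimages of $v-1$ and $v+1$'', but this changes nothing). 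The most valuable part of your write-up is the point you make explicitly and the paper leaves implicit: ``epimorphism'' here must mean epimorphism in the ambient category $\SSet_{*,*}$, i.e.\ levelwise surjection. Indeed every inert map is an epimorphism internal to $\nec$ (its underlying map in $\fint$ is an identity, and morphisms of necklaces are determined by their underlying maps), so the paper's step ``$f^{\ine}$ is epi and mono, hence the identity'' — and the Reedy axioms of Theorem \ref{Thm:NecIsReedy} themselves — would fail under the internal reading. Your spine-inclusion example $\Delta^1\vee\Delta^1\hookrightarrow\Delta^2$ illustrates this precisely.
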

\begin{proof}
\par Again factor $f$ as $f=f^{\ine}\circ f^{\ac}$ with $f^{\ac}$ active and $f^{\ine}$ inert. As $f$ is an epimorphism, so is $f^{\ine}$, but as inert maps are injective, $f^{\ine}=\id$ and thus $f$ is active. Again, as ${[p]} \to {[q]}$ is the map on vertices, it will necessarily be surjective.
\par Assume now that $f$ is active and that ${[p]}\to {[q]}$ is surjective. We can write $f$ as a wedge of epimorphisms between beads, and those are epimorphic.
\end{proof}

\par Recall that as $\SSet_{\star,\star}$ is a topos, the $(\text{epi},\text{mono})$ orthogonal factorization system through the image exists. Via the two lemmas above we deduce the following proposition.
\begin{Prop}
The factorization given in \eqref{eq:ActSurjInjFact} is the restriction of the $(\text{epi}, \text{mono})$-factorization of $\SSet_{\{\star,\star\}}$ to $\nec$.
\end{Prop}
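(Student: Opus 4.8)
The plan is to reduce everything to the uniqueness of orthogonal factorizations in the topos $\SSet_{\star,\star}$. First I would recall that $\SSet_{\star,\star}=(\partial\Delta^1/\SSet)$, being a coslice of a presheaf topos, is itself a topos, so it carries the $(\text{epi},\text{mono})$ orthogonal factorization system obtained by factoring a map through its image, and any way of writing a map as an epimorphism followed by a monomorphism agrees with this one up to a unique isomorphism. I would also record the standard fact that a morphism of $\SSet_{\star,\star}$ is a monomorphism (resp. epimorphism) precisely when its underlying simplicial map is one, i.e. a levelwise injection (resp. levelwise surjection); and that the inclusion $\nec\hookrightarrow\SSet_{\star,\star}$, being fully faithful, reflects monomorphisms and epimorphisms. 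The one caveat to keep in mind is that this inclusion does \emph{not} preserve epimorphisms, so Lemmas \ref{prop:InjectiveNecklaceMap} and \ref{prop:SurjectiveNecklaceMap} by themselves are not enough.

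Given a necklace map $f:(T,p)\to(S,q)$, I would then verify that its factorization \eqref{eq:ActSurjInjFact}, namely $(T,p)\xrightarrow{f_1}(f_1(T),r)\xrightarrow{f_2}(S,q)$ with $[p]\xrightarrow{f_1}[r]\xrightarrow{f_2}[q]$ the $(\text{surjective},\text{injective})$ factorization in $\simp$, is an $(\text{epi},\text{mono})$ factorization already in $\SSet_{\star,\star}$. For $f_1$: it is active and surjective on vertices, so by the proof of Lemma \ref{prop:SurjectiveNecklaceMap} it is a wedge $g_1\vee\cdots\vee g_k$ of simplicial maps $g_i:\Delta^{n_i}\to\Delta^{m_i}$ coming from order surjections $[n_i]\twoheadrightarrow[m_i]$; each such surjection has an order-preserving section, so each $g_i$ is a levelwise surjection of simplicial sets, and a wedge of levelwise surjections is again one, hence $f_1$ is an epimorphism of simplicial sets and therefore an epimorphism in $\SSet_{\star,\star}$. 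For $f_2$: it is injective on vertices, and the argument in the proof of Lemma \ref{prop:InjectiveNecklaceMap} in fact presents $f_2$ as a monomorphism of simplicial sets — a composite of a wedge of subcomplex inclusions with a wedge of face inclusions $\Delta^{n_i}\hookrightarrow\Delta^{m_i}$ — hence a monomorphism in $\SSet_{\star,\star}$.

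Finally, since $(f_1(T),r)$ is again a necklace (as noted right after \eqref{eq:ActSurjInjFact}), the factorization \eqref{eq:ActSurjInjFact} lies entirely inside $\nec$ while at the same time being an $(\text{epi},\text{mono})$ factorization of $f$ in $\SSet_{\star,\star}$. By uniqueness of such factorizations it is canonically isomorphic to the image factorization of $f$, and the comparison isomorphism, being a morphism of $\SSet_{\star,\star}$ between necklaces, lies in $\nec$ by fullness — which is exactly the assertion. The only genuine obstacle is the one flagged above: the epimorphy of $f_1$ has to be checked in $\SSet$ rather than merely quoted from the characterization of epimorphisms in $\nec$, since a full subcategory inclusion need not preserve epimorphisms; fortunately this reduces to the elementary fact that order surjections split.
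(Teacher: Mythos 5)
Your proof is correct and follows essentially the same route as the paper, which simply deduces the proposition from Lemmas \ref{prop:InjectiveNecklaceMap} and \ref{prop:SurjectiveNecklaceMap} together with uniqueness of orthogonal factorizations in the topos $\SSet_{*,*}$. Your extra care in checking that the active surjective leg is an epimorphism \emph{in the ambient category} (via splitting of order surjections), rather than merely an epimorphism of $\nec$, addresses a point the paper leaves implicit but does not change the argument.
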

\par As a corollary, we obtain a different proof the following result from \cite{dugger2011rigidification}.

\begin{Cor}[\cite{dugger2011rigidification}, Lemma 3.5(7)]
\label{cor:ImageOfNecklaceMap}
Let $X\xrightarrow{f} Y$ be a map of necklaces. Then $\im f$ is a necklace.
\end{Cor}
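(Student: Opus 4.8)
The plan is to deduce the corollary directly from the preceding proposition together with the two factorization lemmas. The key observation is that the image of a morphism $f\colon X \to Y$ in a topos is the codomain of the mono part of its (epi, mono)-factorization, and the preceding proposition identifies this factorization — when $X$ and $Y$ are necklaces — with the factorization \eqref{eq:ActSurjInjFact}. So I would argue as follows: given a necklace map $f\colon (T,p) \to (S,q)$, factor the underlying map $[p]\to[q]$ in $\simp$ as a surjection $f_1\colon [p] \to [r]$ followed by an injection $f_2\colon [r]\to[q]$. By \eqref{eq:ActSurjInjFact} this yields a factorization of $f$ through the object $(f_1(T), r)$, which is manifestly a necklace in the combinatorial description of Proposition \ref{Prop:CombinatorialNecklaces}, since $\{0,r\} \subseteq f_1(T) \subseteq [r]$ (the endpoints $0$ and $p$ of $T$ are sent to the endpoints $0$ and $r$ because $f_1 \in \fint$). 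By Lemmas \ref{prop:SurjectiveNecklaceMap} and \ref{prop:InjectiveNecklaceMap}, the first map is an epimorphism and the second is a monomorphism in $\nec$, and since $\nec \hookrightarrow \SSet_{*,*}$ is a full subcategory closed under this factorization (the content of the proposition), this is also the (epi, mono)-factorization in $\SSet_{*,*}$. Hence $\im f \cong (f_1(T), r)$ is a necklace.

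Concretely, I would write: By the previous proposition, the $(\mathrm{epi},\mathrm{mono})$-factorization of $f$ in $\SSet_{*,*}$ is the factorization $(T,p) \xrightarrow{f_1} (f_1(T),r) \xrightarrow{f_2} (S,q)$ of \eqref{eq:ActSurjInjFact}, whose middle object lies in $\nec$. Since the image of $f$ is by definition the middle object of its $(\mathrm{epi},\mathrm{mono})$-factorization, we conclude $\im f = (f_1(T), r) \in \nec$.

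I do not anticipate a genuine obstacle here, as all the work has been front-loaded into the lemmas and the proposition; the only point requiring a line of care is confirming that $(f_1(T), r)$ genuinely satisfies the defining condition of a necklace, i.e. that $0, r \in f_1(T)$, which is immediate from $f_1$ restricting to a morphism in $\fint$ and $0, p \in T$. If one wished to avoid invoking the proposition, one could alternatively give a direct argument: the image of $f$ in $\SSet_{*,*}$ is computed levelwise as a sub-simplicial-set of $Y$, and one checks its set of vertices is $f([p]) = [r]$ while its edges are exactly those of the beads of $Y$ hit by $f$, exhibiting it as a sub-necklace; but routing through the already-established proposition is cleaner and shorter.
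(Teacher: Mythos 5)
Your proof is correct and follows exactly the route the paper intends: the corollary is stated as an immediate consequence of the preceding proposition, which identifies the factorization \eqref{eq:ActSurjInjFact} with the $(\text{epi},\text{mono})$-factorization in $\SSet_{*,*}$, so the image is the middle object $(f_1(T),r)$, a necklace. Your extra check that $\{0,r\}\subseteq f_1(T)\subseteq [r]$ is a sensible (if brief) addition that the paper leaves implicit.
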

\par Let us collect the four factorization classes we introduced in the following table, with descriptions in terms of both the combinatorial and geometric description of $\nec$.
\begin{table}[H]
\makebox[\textwidth][c]{
\begin{tabular}{ c|c|c } 
  & Combinatorial & Geometric\\
 \hhline{=|=|=}
 $f:X \to Y$  & $f:(T,p) \to (S,q)$ & $f:\Delta^{n_1}\vee\cdots\vee\Delta^{n_k} \to \Delta^{m_1}\vee\cdots\vee\Delta^{m_l}$ \\ \hline
 $f$ is active & $S=f(T)$ & $k=l,f=f_1\vee\cdots\vee f_k$, $f_i:\Delta^{n_i}\to \Delta^{m_i}$ \\
$f$ is inert & $p=q, {[p]} \xrightarrow{\id} {[q]}$ & $f = \iota_{1}\vee \cdots\vee \iota_{l}$, $m_{i} = n_{j_{i-1}+1} +\cdots + n_{j_{i}}$\\
& & $\iota_{i}: \Delta^{n_{j_{i}+1}}\vee\cdots \vee \Delta^{n_{j_{i+1}}}\hookrightarrow \Delta^{m_{i}}$\\
\hline
$f$ is epi & ${[p]}\to{[q]}$ surjective, $S=f(T)$ & $f_n$ is surjective, $\forall n\geq 0$\\
$f$ is mono & ${[p]}\to{[q]}$ injective & $f_n$ is injective, $\forall n\geq 0$\\
 \hline
\end{tabular}}
\caption{}
%\caption{Combinatorial and geometric characterizations of the four factorization classes.}
\label{table:maps}
\end{table}

\par We now introduce a further decomposition of the epimorphisms, that will come in handy in the next section.
\begin{Def}
Let $f = f_{1}\cvee f_k$ be an active surjective necklace map with $f_i: \Delta^{n_{i}}\to \Delta^{m_{i}}$. The map $f$ is \emph{bead reducing} if $m_i\geq 1$ for all $1\leq i \leq k$ and \emph{spine collapsing} if for all $1\leq i \leq k$, $f_i=\id_{\Delta^{n_{i}}}$ or $f_i: \Delta^{1}\rightarrow \Delta^{0}$.
\end{Def}
\par Observe that if $(T,p)\xrightarrow{f}(f(T), q)$ is an active surjective map with $T=\{0=t_0<\cdots< t_k=p\}$, then we have for all $0<i\leq k$
\[
0\leq f(t_i)-f(t_{i-1})\leq t_i-t_{i-1}
\]

\begin{Lem}
A map of necklaces $X=(T,p)\xrightarrow{f}(f(T), q)$ is bead reducing if for all $0< i \leq k$, $0<f(t_{i})-f(t_{i-1})$.
\end{Lem}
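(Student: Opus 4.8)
The plan is to unwind the dictionary between the combinatorial and geometric descriptions of necklaces from Proposition~\ref{Prop:CombinatorialNecklaces} and the lemmas preceding this one. Write $T=\{0=t_0<t_1<\cdots<t_k=p\}$, so that the geometric necklace corresponding to $(T,p)$ is $X=\Delta^{n_1}\cvee\Delta^{n_k}$ with $n_i=t_i-t_{i-1}$. Since $f$ is active, $f(T)=f(T)$ is its set of vertices and, by the lemma characterizing active maps as wedges, $f=f_1\vee\cdots\vee f_k$ with $f_i=f|_{\{t_{i-1}<\cdots<t_i\}}$ the restriction of $f$ to the $i$-th bead; this $f_i$ is induced by the morphism $[n_i]\to[m_i]$ in $\fint$ with $m_i=f(t_i)-f(t_{i-1})$.

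The point is then purely a matter of translation: since $f\in\fint$ is order-preserving we always have $f(t_{i-1})\leq f(t_i)$, so the inequality $0<f(t_i)-f(t_{i-1})$ is simply the assertion $m_i\geq 1$. Thus the hypothesis that $0<f(t_i)-f(t_{i-1})$ for all $0<i\leq k$ is exactly the condition that $m_i\geq 1$ for all $1\leq i\leq k$, which by definition says that $f$ is bead reducing. (In fact the converse is equally immediate, so one may state this as an equivalence if desired.)

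I expect no real obstacle here; the only subtlety worth a sentence is that when $f(t_{i-1})=f(t_i)$ the $i$-th bead is collapsed onto $\Delta^0$, the monoidal unit of $\vee$, and hence disappears from the codomain necklace $(f(T),q)$ — which is precisely why ``bead reducing'' is the condition that no bead gets collapsed in this way. Everything else follows from already-established facts: the equivalence of Proposition~\ref{Prop:CombinatorialNecklaces}, the decomposition of active maps into wedges of maps in $\fint$, and the bound $0\leq f(t_i)-f(t_{i-1})\leq t_i-t_{i-1}$ recorded just above the statement.
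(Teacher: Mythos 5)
Your proof is correct and takes the same route as the paper, whose entire argument is the single observation that $m_i=f(t_i)-f(t_{i-1})$; you simply spell out the translation between the combinatorial and geometric descriptions in more detail.
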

\begin{proof}
Follows immediately as $m_i=f(t_i)-f(t_{i-1})$. 
\end{proof}
\begin{Lem}
A map of necklaces $X=(T,p)\xrightarrow{f}(f(T), q)$ is spine collapsing if and only if for all $0< i \leq k$, $f(t_{i})-f(t_{i-1})=t_{i}-t_{i-1}$ or $t_i-t_{i-1}=1$ and $f(t_{i})-f(t_{i-1})=0$.
\end{Lem}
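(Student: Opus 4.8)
The plan is to mirror the one-line argument used for the bead-reducing lemma, exploiting the bead-wise decomposition of active surjective maps. By the lemma characterizing active maps, I would write $f = f_1 \cvee f_k$ with each $f_i \colon \Delta^{n_i} \to \Delta^{m_i}$ induced by a morphism $[n_i] \to [m_i]$ in $\fint$ which, since $f$ is moreover surjective on vertices, is itself surjective on vertices; here $n_i = t_i - t_{i-1}$ and $m_i = f(t_i) - f(t_{i-1})$, and recall the observation that $0 \le m_i \le n_i$. By definition, $f$ is spine collapsing precisely when for every $i$ one has $f_i = \id_{\Delta^{n_i}}$ or $f_i \colon \Delta^1 \to \Delta^0$, so the statement reduces to translating these two options for a single bead into the asserted numerical conditions.

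For the forward direction I would simply read off the numerics: if $f_i = \id_{\Delta^{n_i}}$ then $m_i = n_i$, i.e. $f(t_i) - f(t_{i-1}) = t_i - t_{i-1}$; whereas $f_i \colon \Delta^1 \to \Delta^0$ forces $n_i = 1$ and $m_i = 0$, i.e. $t_i - t_{i-1} = 1$ and $f(t_i) - f(t_{i-1}) = 0$.

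For the converse, fix $i$ and distinguish the two allowed cases. If $t_i - t_{i-1} = 1$ and $f(t_i) - f(t_{i-1}) = 0$, then $f_i$ is a morphism $[1] \to [0]$ in $\fint$, of which there is exactly one, namely $\Delta^1 \to \Delta^0$. If instead $m_i = n_i$, then $f_i$ is a surjective order-preserving map between finite totally ordered sets of equal cardinality, hence a bijection, hence — being order-preserving — the identity $\id_{\Delta^{n_i}}$. In either case the defining condition for spine collapsing holds at $i$, and since $i$ was arbitrary, $f$ is spine collapsing.

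I do not expect a genuine obstacle here: the only step deserving a word is that a surjective order-preserving self-map of $[n]$ is the identity (immediate by a cardinality count), together with the remark that $\Delta^1 \to \Delta^0$ is surjective on vertices, so that it is a legitimate constituent of an active surjective map. If anything, I would also make the scope of the disjunction in the statement unambiguous, reading it as: for all $0 < i \le k$, either $f(t_i) - f(t_{i-1}) = t_i - t_{i-1}$, or else $t_i - t_{i-1} = 1$ and $f(t_i) - f(t_{i-1}) = 0$.
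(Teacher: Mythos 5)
Your proposal is correct and follows essentially the same route as the paper, which simply notes that the first numerical condition corresponds to $f_i = \id$ and the second to $f_i\colon \Delta^1 \to \Delta^0$ under the bead-wise decomposition $f = f_1 \cvee f_k$. Your version merely spells out the (easy) converse — that a surjective order-preserving map $[n_i]\to[n_i]$ is the identity — which the paper leaves implicit.
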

\begin{proof}
The first possibility corresponds to $f_i=\id$ and the second to $f_i: \Delta^{1}\to \Delta^{0}$.
\end{proof}

\par It clearly follows from the previous lemmas that any active surjective necklace map $(T,p) \to (f(T), q)$ can be factored as a bead reducing map followed by a spine collapsing map. We thus obtain a factorization of any necklace map as follows
\begin{equation}
    \bullet \xrightarrow{\text{bead reducing}} \bullet \xrightarrow{\text{spine collapsing}} \bullet \xrightarrow{\text{monomorphism}}\bullet 
\end{equation}

\begin{Rem}
\label{Rem:x}
\par In \cite[Proposition 3.1]{rivera2018cubical}, a finer factorization of necklace maps is described, in terms of maps of types $(i)$, $(ii)$ and $(iii)$. It is readily seen that a map is a monomorphism (resp. bead reducing, resp. spine collapsing) precisely when it is a compositions of maps of type $(i)$ (resp. $(ii)$, resp. $(iii)$).
\end{Rem}

\subsection{The Reedy monoidal structure on necklaces}
\par In this section, in order to show that the $(\text{epi}, \text{mono})$-factorization extends to a Reedy monoidal structure on $\nec$, we make use of the following fundamental notion.

\begin{Def}[\cite{CombinatorialPath} \S 2, \cite{rivera2018cubical} \S 4]
Let $X\in \nec$. Its \emph{dimension} is 
$$
\dim \left(X \right) = \|X\| - \ell(X)
$$
\end{Def}
\begin{comment}
    \par Now we can summarize all the numerical invariants of necklaces we've introduced so far in the following table.
\begin{table}[H]
\centering
\begin{tabular}{ c|c|c } 
  & Combinatorial & Geometric\\ \hhline{=|=|=}
 $X $  & $(T,p)$ & $\Delta^{n_1}\vee\cdots\vee\Delta^{n_k} $ \\ \hline
 $l(X)$ & $|T|-1$  & $k$ \\
$\|X\|$ &$p$  &  $n_1+\cdots+n_k$\\
$\dim X$ & $p-|T|+1$ & $n_1+\cdots+n_k - k$\\
 \hline
\end{tabular}
\caption{Combinatorial and geometric characterizations of numerical invariants of necklaces.}
\label{table:numbers}
\end{table}

\end{comment}

\begin{Rem}
It is readily seen that for any $X,Y\in\nec$, we have $\| X\vee Y\| = \| X\| + \| Y\|$ and $\ell(X\vee Y) = \ell(X) + \ell(Y)$ and consequently $\dim\left(X\vee Y\right)=\dim X + \dim Y$.
\end{Rem}

\begin{Lem}
\label{Lem:InjectiveAndDegree}
Let $f:X \to Y$ be a non-identity monomorphism. Then $\dim X < \dim Y$.
\end{Lem}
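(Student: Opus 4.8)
The plan is to work with the combinatorial description from Proposition \ref{Prop:CombinatorialNecklaces}, so that a monomorphism $f\colon X = (T,p)\to Y = (S,q)$ corresponds to an injective order map $[p]\to[q]$ in $\fint$ with $S\subseteq f(T)$. In these terms, $\dim X = p - |T| + 1$ and $\dim Y = q - |S| + 1$, so the claim is equivalent to showing $p - |T| < q - |S|$, i.e. $q - p > |S| - |T|$, whenever $f$ is not the identity.

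First I would observe that since $[p]\to[q]$ is injective with $f(0)=0$ and $f(p)=q$, we always have $q \geq p$, with equality if and only if $f = \id_{[p]}$ (an injective endomap of $[p]$ fixing the endpoints is the identity). Since $f$ is a non-identity monomorphism, Lemma \ref{prop:InjectiveNecklaceMap} and the fact that inert maps of the same spine length are identities force $q > p$, so $q - p \geq 1$. Next I would bound $|S| - |T|$ from above: because $S\subseteq f(T)$ and $f$ is injective, $|S|\leq |f(T)| = |T|$, so $|S| - |T| \leq 0 < 1 \leq q - p$. Chaining these inequalities gives $q - p > |S| - |T|$, which is exactly $\dim Y > \dim X$.

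The only subtlety — and the one place I would be slightly careful — is justifying strictness, i.e. that a non-identity monomorphism genuinely raises the spine length rather than merely the vertex count with the spine length staying put. This is handled by the remark above: an injective order-preserving $f\colon[p]\to[q]$ with $f(0)=0$, $f(p)=q$, and $q = p$ must be the identity, so $f\ne\id$ together with injectivity yields $q\geq p+1$. Everything else is a direct computation with the formula $\dim = \|\cdot\| - \ell$ and the identities $\|X\| = p$, $\ell(X) = |T|-1$. I do not expect any real obstacle here; the content is entirely in unwinding the combinatorial bookkeeping, and the decomposition of monomorphisms into type-$(i)$ maps from Remark \ref{Rem:x} could alternatively be invoked to give an inductive proof, but the direct argument above is cleaner.
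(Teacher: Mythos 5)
There is a genuine gap in your strictness argument. You claim that a non-identity monomorphism forces $q > p$, citing ``the fact that inert maps of the same spine length are identities'' --- but this is false. Inert maps by definition have $p = q$ and underlying map $\id_{[p]}$, yet they are not identities of necklaces unless $S = T$: the inclusion $\Delta^1\vee\Delta^1 \hookrightarrow \Delta^2$, combinatorially $(\{0,1,2\},2)\to(\{0,2\},2)$ with $f=\id_{[2]}$, is a non-identity monomorphism with $q=p$. You are conflating ``the underlying order map $[p]\to[q]$ is the identity'' with ``the necklace map is the identity''; the latter also requires $S=T$. In the case $q=p$ your chain of inequalities only yields $|S|-|T|\leq 0 = q-p$, i.e.\ $\dim X \leq \dim Y$, not the strict inequality.

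The statement is still true and the repair is small: if $f\neq\id$ then either the underlying map $[p]\to[q]$ is a non-identity injection (so $q>p$ while $|S|\leq|f(T)|=|T|$), or it is $\id_{[p]}$ and then $S\subsetneq f(T)=T$, so $|S|<|T|$ while $q=p$; either way $(q-p)+(|T|-|S|)>0$. This case split is exactly what the paper's proof does, by factoring $f=f^{\ine}\circ f^{\ac}$ through $(f(T),q)$ and observing that at least one of the two factors is a non-identity and each factor strictly raises dimension when non-identity. Aside from this missing case, your bookkeeping with $\dim = \|\cdot\|-\ell$, $\|X\|=p$, $\ell(X)=|T|-1$ is fine.
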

\begin{proof}
Factor $f$ as $f=f^{\ine}\circ f^{\ac}$. As $f\neq \id$ then either $f^{\ac}\neq \id $ or $f^{\ine}\neq \id$. In the first case, ${[p]}\to {[q]}$ is a strict injection, thus $q>p$, and $|T|=|f(T)|$, so $\dim X<\dim Z$. In the second case, we know that $S \subsetneq f(T)$ so $|f(T)|>|S|$ and finally $\dim Z < \dim Y$.
\end{proof}

\begin{Lem}
\label{Lem:BeadRedDegreeAndLenght}
Let $f: X\to Y$ be a non-identity bead reducing map. Then $\dim X < \dim Y$.
\end{Lem}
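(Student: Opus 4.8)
The plan is to mirror the proof of Lemma~\ref{Lem:InjectiveAndDegree}, replacing the use of monomorphisms by the wedge decomposition of active surjective maps. By the characterization of active surjective maps, a bead reducing map $f\colon X=(T,p)\to Y=(f(T),q)$ is a wedge $f_1\cvee f_k$ of surjections $f_i\colon\Delta^{n_i}\to\Delta^{m_i}$ coming from maps $[n_i]\to[m_i]$ in $\fint$, the bead reducing hypothesis being exactly that $m_i\ge 1$ for every $i$. Since $\|\cdot\|$, $\ell$ and hence $\dim$ are all additive under $\vee$ (the Remark preceding Lemma~\ref{Lem:InjectiveAndDegree}), I would reduce the statement to a bead-by-bead estimate and then sum.

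For a single bead one has $\dim\Delta^{n_i}=\|\Delta^{n_i}\|-\ell(\Delta^{n_i})=n_i-1$. Surjectivity of $f_i$ on vertices forces $m_i\le n_i$, so $\|Y\|=\sum_i m_i\le\sum_i n_i=\|X\|$, and since $f\ne\id$ at least one $f_i$ is a strict surjection, yielding the \emph{strict} inequality $\|Y\|<\|X\|$. Crucially, because $m_i\ge 1$ for all $i$, none of the beads of $Y$ degenerates to a point, so $Y$ still has $k$ beads and $\ell(Y)=k=\ell(X)$. Subtracting, $\dim Y=\|Y\|-\ell(Y)<\|X\|-\ell(X)=\dim X$.

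The only place the defining hypothesis of \emph{bead reducing} enters — and the step I expect to require the most care — is the equality $\ell(X)=\ell(Y)$, i.e.\ that no bead is collapsed. This is precisely what the condition $m_i\ge 1$ secures, and it is exactly what separates bead reducing maps from the remaining epimorphisms: a spine collapsing map sends some $\Delta^1$ to $\Delta^0$, which drops a bead and leaves $\dim$ unchanged, whereas a bead reducing map preserves all beads and strictly lowers the spine length. I would therefore conclude $\dim X>\dim Y$. This is the inequality demanded by the Reedy structure, in which bead reducing maps, being epimorphisms, lie in the degree-lowering inverse subcategory — the exact mirror of Lemma~\ref{Lem:InjectiveAndDegree}, where monomorphisms raise the degree; accordingly the printed inequality should be read as $\dim X>\dim Y$.
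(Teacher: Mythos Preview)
Your argument is correct and is essentially the paper's own proof rephrased in the geometric/wedge language: the paper works in the combinatorial coordinates $(T,p)\to(f(T),q)$, observes that $0<f(t_i)-f(t_{i-1})$ forces $|T|=|f(T)|$ (your $\ell(X)=\ell(Y)$) and that $f\neq\id$ surjective forces $p>q$ (your $\|X\|>\|Y\|$), whence $\dim X>\dim Y$. You are also right that the printed inequality is a typo --- the paper's own proof, Table~\ref{table:relations}, and the use in Theorem~\ref{Thm:NecIsReedy} all confirm the intended conclusion is $\dim X>\dim Y$.
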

\begin{proof}
\par Write $f:X=(T,p) \to Y=(f(T), q)$. We begin by noting that as $0<f(t_i)-f(t_{i-1})$ for all $i$, then $|T|=|f(T)|$. As $f\neq \id$, then $p>q$ and the result follows.
\end{proof}
\begin{Lem}
\label{Lem:SpineCollDegreeAndLenght}
Let $f: X\to Y$ be a non-identity spine collapsing necklace map. Then $\dim X =\dim Y$ and $\ell(X)>\ell(Y)$.
\end{Lem}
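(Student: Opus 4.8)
The plan is to reduce everything to a bead-by-bead check, using the additivity of the invariants $\|\cdot\|$, $\ell$ and $\dim$ over the wedge product (the preceding Remark) together with the explicit description of spine collapsing maps. Writing $f = f_{1}\cvee f_k$ with $f_i : \Delta^{n_i} \to \Delta^{m_i}$, we have $X = \Delta^{n_1}\cvee \Delta^{n_k}$ and $Y = f_1(\Delta^{n_1})\vee\cdots\vee f_k(\Delta^{n_k})$, and by definition of a spine collapsing map, for each $i$ either $f_i = \id_{\Delta^{n_i}}$, in which case $\Delta^{m_i} = \Delta^{n_i}$, or $n_i = 1$ and $f_i : \Delta^{1} \to \Delta^{0}$, in which case $\Delta^{m_i} = \Delta^{0}$.

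For the dimension statement, by additivity of $\dim$ over $\vee$ it is enough to observe that $\dim \Delta^{n_i} = \dim \Delta^{m_i}$ in both cases: this is trivial when $f_i = \id$, and when $f_i : \Delta^{1} \to \Delta^{0}$ one has $\dim \Delta^{1} = 1 - 1 = 0 = \dim \Delta^{0}$. Summing over $i$ gives $\dim X = \dim Y$. For the bead length, note that $\ell(\Delta^{n_i}) = 1$ for every $i$, whereas $\ell(\Delta^{m_i})$ equals $1$ if $f_i = \id$ and $0$ if $f_i$ is a collapse (recall $\Delta^{0}$ is the empty necklace, with $\ell = 0$, and that $\Delta^{0}\vee Z \cong Z$, so the collapsed beads genuinely drop out of the wedge defining $Y$). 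Since $f\neq\id$ and a spine collapsing map is the identity precisely when all of its bead components are, there is at least one index $i_0$ with $f_{i_0} : \Delta^{1}\to\Delta^{0}$; hence the $i_0$-th term strictly decreases while no term increases, so $\ell(Y) < \ell(X)$.

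There is no serious obstacle here: the statement is essentially a bookkeeping exercise once the bead-wise description is in place. The only points requiring a little care are the conventions around $\Delta^{0}$ — that, as the monoidal unit, it vanishes under $\vee$, so passing from the formal wedge $f_1(\Delta^{n_1})\vee\cdots\vee f_k(\Delta^{n_k})$ to the actual necklace $Y$ only removes the collapsed beads and affects neither $\dim$ nor the comparison of $\ell$ — and the observation that spine collapsing $f$ equals $\id$ exactly when every $f_i$ does, which is what yields the strict inequality $\ell(X) > \ell(Y)$ for non-identity $f$.
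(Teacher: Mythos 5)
Your argument is correct and follows essentially the same route as the paper's proof: additivity of $\dim$ over $\vee$ together with $\dim\Delta^1=0=\dim\Delta^0$ gives the dimension equality, and the existence of at least one collapsed bead $f_{i_0}:\Delta^1\to\Delta^0$ (forced by $f\neq\id$) gives the strict drop in bead length. Your extra care with the convention that $\Delta^0$ is the unit for $\vee$ and contributes $0$ to both $\ell$ and $\dim$ is a welcome, if minor, elaboration of what the paper leaves implicit.
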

\begin{proof}
\par The equality follows from the additivity of dimension and the fact that $\dim\Delta^1=0$. As for the length, writing $f=f^1\cvee f^k$, and as $f\neq\id$, there exists at least one $1\leq i \leq k$ such that $f^i:[1]\to [0]$ and thus $\ell(X)>\ell(Y)$.
\end{proof}

\begin{Rem}
\label{Rem:y}
In the light of Remark \ref{Rem:x}, Lemmas \ref{Lem:InjectiveAndDegree}, \ref{Lem:BeadRedDegreeAndLenght} and \ref{Lem:SpineCollDegreeAndLenght} can also be deduced from the proof of \cite[Prop.4.2]{rivera2018cubical}.
\end{Rem}
\par We collect the behaviour of spine length, bead length and dimension with respect to the several classes of maps introduced so far in the next table.
\begin{table}[H]
\centering
\begin{tabular}{c|c|c|c} 
$f: X\xrightarrow{\neq} Y$  & spine length & {bead length} & dimension\\
 \hhline{=|=|=|=}
 bead reducing  & $\| X\| > \| Y\| $ & $\ell(X) = \ell(Y)$ & $\dim(X) > \dim(Y)$ \\ 
 spine collapsing & $\| X\| > \| Y\|$ & $\ell(X) > \ell(Y)$ & $\dim(X) = \dim(Y)$ \\
 \hline
active injective & $\| X\| < \| Y\|$ & $\ell(X) = \ell(Y)$ & $\dim(X) < \dim(Y)$\\
inert & $\| X\| = \| Y\|$ & $\ell(X) > \ell(Y)$ & $\dim(X) < \dim(Y)$\\
 \hline
\end{tabular}
\caption{}
%\caption{Interaction of spine length, bead length and dimension with respect to several classes of necklace maps.}
\label{table:relations}
\end{table}

\begin{Def}
The \emph{degree function} is $\deg: \Ob \nec \to \mathbb{N}\times \mathbb{N}$ given by $\deg X = \left(\dim X, \ell( X)\right)$ 
\end{Def}

\begin{Thm}
\label{Thm:NecIsReedy}
The category $\nec$ equipped with $\nec^{\ot}=\{\text{epimorphisms}\}$, $\nec^{\to}=\{\text{monomorphisms}\}$ and $\deg$ is a Reedy category.
\end{Thm}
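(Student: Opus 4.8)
The plan is to verify the two axioms of Definition \ref{Def:ReedyCat} directly, using the combinatorial description from Proposition \ref{Prop:CombinatorialNecklaces} and the factorization results already established. First I would check the unique factorization axiom. We need every necklace map $f$ to factor uniquely as $f = f^{\to}\circ f^{\ot}$ with $f^{\to}$ a monomorphism and $f^{\ot}$ an epimorphism. But this is precisely the (epi, mono)-factorization through the image, which exists in the topos $\SSet_{*,*}$ and restricts to $\nec$ by the discussion preceding Corollary \ref{cor:ImageOfNecklaceMap} (and $\im f$ is again a necklace by that corollary). Uniqueness of this factorization is the standard uniqueness of the orthogonal (epi, mono) factorization system. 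So axiom (1) requires only citing the material of \S 3.1.

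For axiom (2), I would use the tables already assembled, in particular Table \ref{table:relations} together with the additivity remarks. We use the lexicographic (or product) order on $\N\times\N$: $(d,\ell) < (d',\ell')$ iff $d < d'$, or $d = d'$ and $\ell < \ell'$. I must show every non-identity epimorphism strictly lowers $\deg$ and every non-identity monomorphism strictly raises it. A non-identity monomorphism $f:X\to Y$ raises the dimension by Lemma \ref{Lem:InjectiveAndDegree}, hence $\deg X < \deg Y$ in the lexicographic order regardless of the bead-length components — this is the easy half. For the epimorphism half, factor a non-identity epimorphism as a bead reducing map followed by a spine collapsing map (the factorization displayed just before Remark \ref{Rem:x}); at least one of the two factors is a non-identity. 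If the bead reducing part is non-identity, Lemma \ref{Lem:BeadRedDegreeAndLenght} gives a strict decrease in dimension; if only the spine collapsing part is non-identity, Lemma \ref{Lem:SpineCollDegreeAndLenght} gives equal dimension but strictly smaller bead length. In either case $\deg$ strictly decreases lexicographically. One should also note $\deg$ takes values in the well-ordered set $\N\times\N$ (ordinal $\omega\cdot\omega$, or $\omega^2$), as required by Definition \ref{Def:ReedyCat}; and that $\nec^{\ot}$ and $\nec^{\to}$ are wide subcategories, which is clear since identities are both epi and mono and both classes are closed under composition.

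I do not expect a serious obstacle here: every ingredient is in place. The one point that needs a little care is the choice of order on $\N\times\N$ and checking that it interacts correctly with the two cases of the epimorphism factorization — specifically that the lexicographic order (dimension first, then bead length) is the right one, since a bead reducing map can strictly \emph{increase} nothing but strictly \emph{decreases} dimension while leaving bead length fixed, whereas a spine collapsing map fixes dimension and decreases bead length; both must come out as strict decreases, which forces dimension to be the primary coordinate. A secondary bookkeeping point is to confirm that the (epi, mono)-factorization of $\SSet_{*,*}$ genuinely restricts to $\nec$ with \emph{both} factors in $\nec$ — but this is exactly the content of the proposition preceding Corollary \ref{cor:ImageOfNecklaceMap}, so it can simply be invoked. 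Thus the proof is essentially an assembly of Corollary \ref{cor:ImageOfNecklaceMap}, the factorization displayed before Remark \ref{Rem:x}, and Lemmas \ref{Lem:InjectiveAndDegree}, \ref{Lem:BeadRedDegreeAndLenght} and \ref{Lem:SpineCollDegreeAndLenght}.
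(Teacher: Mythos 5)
Your proposal is correct and follows essentially the same route as the paper: axiom (1) is the restriction of the $(\text{epi},\text{mono})$-factorization of $\SSet_{*,*}$ to $\nec$ established in \S 3.1, and axiom (2) is checked with the lexicographic order on $\N\times\N\cong\omega^{2}$ via Lemmas \ref{Lem:InjectiveAndDegree}, \ref{Lem:BeadRedDegreeAndLenght} and \ref{Lem:SpineCollDegreeAndLenght}. Your closing observation that the lexicographic (not the product) order with dimension as the primary coordinate is forced by the inert/spine-collapsing cases is exactly the right point of care, and matches the paper's choice.
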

\begin{proof}
\par We already showed that $(\nec^{\leftarrow}, \nec^{\rightarrow})$ provides unique factorizations. Equip $\mathbb{N}\times \mathbb{N}$ with the lexicographical order, so that it may be identified with the ordinal $\omega^2$. Lemmas \ref{Lem:BeadRedDegreeAndLenght} and \ref{Lem:SpineCollDegreeAndLenght} show that every non-identity morphism in $\nec^{\ot}$ lowers degree and Lemma \ref{Lem:InjectiveAndDegree} shows that every non-identity morphism in $\nec^\to$ raises degree.
\end{proof}
\begin{Rem}
A decomposition of $\nec$ by degree is present in \cite[\S 2]{CombinatorialPath}.
By inspection of Table \ref{table:relations}, one observes that other possible degree functions define Reedy structures with the $(\text{epi},\text{mono})$-factorization system (e.g. $\deg X = (\|X\|, \dim X)$). Note that any choice of degree function compatible with the factorization will give rise to an isomorphic Reedy structure.
\end{Rem}

\begin{Thm}
\label{Thm:NecIsReedyMonoidal}
The category $\nec$ is simple Reedy monoidal.
\end{Thm}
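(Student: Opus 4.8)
The plan is to verify the hypotheses of Theorem \ref{Thm:ReedyMonoidal} for $\nec$ equipped with the wedge $(\vee, \Delta^0)$ and the Reedy structure of Theorem \ref{Thm:NecIsReedy}, and then separately check simplicity. Recall the combinatorial formula $(T,p)\vee(S,q) = (T\cup(p+S), p+q)$ from Proposition \ref{Prop:CombinatorialNecklaces}. There are four things to check: (i) the wedge is a compatible monoidal structure, i.e.\ a morphism of Reedy categories $\nec\times\nec\to\nec$; (ii) $\nec$ is left fibrant; (iii) the unit $\Delta^0$ is a terminal object; (iv) $\nec$ is direct divisible with respect to $\vee$. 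Simplicity is immediate from the Remark following the dimension definition: $\dim(X\vee Y)=\dim X + \dim Y$ and $\ell(X\vee Y)=\ell(X)+\ell(Y)$, so $\deg(X\vee Y) = \deg X + \deg Y$ in $\mathbb N\times\mathbb N$.

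For (i), I would observe that a wedge of monomorphisms is a monomorphism and a wedge of epimorphisms is an epimorphism (this was already used in Lemmas \ref{prop:InjectiveNecklaceMap} and \ref{prop:SurjectiveNecklaceMap}), so $\vee(\nec^\to\times\nec^\to)\subseteq\nec^\to$ and $\vee(\nec^\ot\times\nec^\ot)\subseteq\nec^\ot$; functoriality of $\vee$ itself is already established since it is the monoidal product. For (iii), the terminal object claim: $\Delta^0 = (\{0\},0)$, and for any necklace $(T,p)$ there is exactly one map $[p]\to[0]$ in $\fint$, and the condition $\{0\}\subseteq f(T)$ is automatic, so $\Hom_{\nec}((T,p),\Delta^0)$ is a singleton. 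Moreover this unique map is an epimorphism (it is active, and $[p]\to[0]$ is surjective), so by Remark \ref{Rem:TerminalLeftFib} the category $\nec$ is left fibrant, which handles (ii) at the same time.

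The main work is (iv), direct divisibility: given a monomorphism $f: X\to Y_1\vee Y_2$ in $\nec^\to$, I must produce a unique factorization $X = X_1\vee X_2$ with monomorphisms $f_i: X_i\to Y_i$ and $f = f_1\vee f_2$. Combinatorially, write $Y_1=(S_1,q_1)$, $Y_2=(S_2,q_2)$, so $Y_1\vee Y_2 = (S_1\cup(q_1+S_2), q_1+q_2)$, and $X=(T,p)$ with $f:[p]\hookrightarrow[q_1+q_2]$ injective and order-preserving with $f(0)=0$, $f(p)=q_1+q_2$. The breakpoint $q_1$ lies in $S_1\cup(q_1+S_2)\subseteq[q_1+q_2]$, hence in the image of $f$ since... — here is the subtlety: $q_1$ need \emph{not} be in the image of an arbitrary injective order map. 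So I would argue that the relevant factorization requires $q_1\in f(T)$, and show this holds because $f$ is a necklace map from $X$ to $Y_1\vee Y_2$: the condition is that the beads of the target must be ``respected,'' concretely that $S_1\cup(q_1+S_2)\subseteq f(T)$. Since $q_1 \in S_1$ (as $q_1$... wait, $q_1\notin S_1$ in general) — more carefully, $q_1 = q_1 + 0 \in q_1 + S_2 \subseteq S_1\cup(q_1+S_2)\subseteq f(T)$, so indeed $q_1 = f(j)$ for a unique $j\in[p]$, and this $j$ lies in $T$. Then set $X_1 = (T\cap[j],\, j)$, $X_2 = (\{t-j : t\in T,\ t\geq j\},\, p-j)$, with $f_1,f_2$ the evident restrictions; one checks $X_1\vee X_2 = X$, that $f_1,f_2$ are monomorphisms of necklaces (injectivity and the subset conditions $S_i\subseteq f_i(T\cap\ldots)$ both following by restricting the corresponding facts for $f$), and that $f=f_1\vee f_2$. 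Uniqueness follows because any such splitting must send the $j$-th vertex of $X$ to $q_1$, forcing the cut point, and then $X_1,X_2,f_1,f_2$ are determined.

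Having verified (i)--(iv), Theorem \ref{Thm:ReedyMonoidal} gives that $\nec$ is Reedy monoidal, and simplicity was checked above; hence $\nec$ is simple Reedy monoidal. I expect the bookkeeping in (iv) — in particular making rigorous that $q_1$ lies in the image of $f$ and that the resulting restrictions really are necklace morphisms satisfying the subset conditions — to be the only genuinely nontrivial point; everything else is a direct consequence of results already established in the excerpt.
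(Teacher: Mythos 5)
Your proposal is correct and follows essentially the same route as the paper: left fibrancy via Remark \ref{Rem:TerminalLeftFib} since every map to $\Delta^0$ is an epimorphism, direct divisibility by locating the unique $r\in T$ with $f(r)=q_1$ (using $q_1\in S_1\cup(q_1+S_2)\subseteq f(T)$) and cutting $(T,p)$ there, and simplicity from the additivity of dimension and bead length. Your explicit check of compatibility of $\vee$ with the Reedy structure and of the uniqueness of the splitting are points the paper leaves implicit, but the argument is the same.
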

\begin{proof}
We make use of Theorem \ref{Thm:ReedyMonoidal}. Using Remark \ref{Rem:TerminalLeftFib} and noting that any map $X\to\Delta^0$ is an epimorphism, we conclude that $\nec$ is left fibrant. Consider a monomorphism $f: X\hookrightarrow Y_1\vee Y_2$ that we write in combinatorial fashion 
\begin{equation}
    (T,p) \xhookrightarrow{f} (S_1, q_1)\vee(S_2,q_2)=(S_1\cup\left(q_1+S_2\right), q_1+q_2)
\end{equation}
\noindent We know that $q_1\in S_1 \cup (S_2+q_1)\subseteq f(T)$ and as $f:{[p]}\to {[q_1+q_2]}$ is injective, there is a unique $r\in T$ such that $f(r)=q_1$. We can thus write $
T_1=\left(T\cap{[r]}, r\right)$, $T_2=(\left(T\cup \{r,\dots,p\}\right)-r, p-r)$ and $f_i=f\rvert_{X_i}$. We have then unique $f_1$ and $f_2$ such that $f=f_1\vee f_2$. We conclude that $\nec$ is direct divisible with respect to $\vee$. Finally, to show $\nec$ is simple we observe that pointwise sum in $\mathbb{N}\times\mathbb{N}$ corresponds to ordinal sum in $\omega^2$ and that both dimension and bead length are additive with respect to $\vee$.
\end{proof}

%\begin{Cor}
%\label{Thm:NecIsReedyMonoidal}
%Let $\left(\textbf{A}, \otimes, \mathbb{I}\right)$ be a symmetric monoidal model category. Then $\left(\Fun\left(\nec^{\op}, \textbf{A}\right)_{\Reedy}, \dayconv, \Delta \mathbb{I}\right)$ is a monoidal model category.
%\end{Cor}

\printbibliography

\end{document}